  \def\sw#1{{\sb{(#1)}}} 
  \def\su#1{{\sp{[#1]}}}  
  \def\suc#1{{\sp{(#1)}}}
  \def\<{{\langle}} 
  \def\>{{\rangle}}
  \def\eps{\varepsilon} 
  \def\note#1{{}}
  \def\note#1{} 
  \def\cM{{\mathbf{cM}}} 
  \def\cuM{{\mathbf{cuM}}}
  \def\beq{\begin{equation}} 
  \def\eeq{\end{equation}}
  \def\id{\mathrm{id}}
  \def\ot{{\otimes}}
  \def\roM{\varrho^{M}} 
   \def\roN{\varrho^{N}}
  \newcounter{zlist} 
  \newenvironment{zlist}{\begin{list}{(\arabic{zlist})}{ 
  \usecounter{zlist}\leftmargin2.5em\labelwidth2em\labelsep0.5em 
  \topsep0.6ex
  \parsep0.3ex plus0.2ex minus0.1ex}}{\end{list}}
  \newcounter{blist} 
  \newenvironment{blist}{\begin{list}{(\alph{blist})}{ 
  \usecounter{blist}\leftmargin2.5em\labelwidth2em\labelsep0.5em 
  \topsep0.6ex 
  \parsep0.3ex plus0.2ex minus0.1ex}}{\end{list}} 
  \newcounter{rlist}
\def\stac#1{\raise-.2cm\hbox{$\stackrel{\displaystyle\otimes}{\scriptscriptstyle{#1}}$}}
\def\cten#1{\raise-.2cm\hbox{$\stackrel{\displaystyle\widehat{\otimes}}
{\scriptscriptstyle{#1}}$}}
  \def\Label#1{\label{#1}\ifmmode\llap{[#1] }\else 
  \marginpar{\smash{\hbox{\tiny [#1]}}}\fi} 
  \def\Label{\label}
  \newtheorem{proposition}{Proposition}[section]
  \newtheorem{lemma}[proposition]{Lemma} 
  \newtheorem{corollary}[proposition]{Corollary}
\theoremstyle{definition} 
  \newtheorem{definition}[proposition]{Definition}
  \newtheorem{example}[proposition]{Example}
  \theoremstyle{remark} 
  \newtheorem{remark}[proposition]{Remark}
  \newcounter{c} 
  \newcommand{\etyk}[1]{\vspace{-7.4mm}$$\begin{equation}\Label{#1} 
  \addtocounter{c}{1}} 
  \renewcommand{\]}{\ifnum \value{c}=1 $$\else \end{equation}\fi} 
\numberwithin{equation}{section}
\def\ot{\otimes}
\def\KK{{\mathbb K}}
\newcommand{\Cc}{\mathcal{C}}
\newcommand{\Tt}{\mathcal{T}}
\def\M{{\bf M}}
\def\*C{{}^*\hspace*{-1pt}{\Cc}}
\def\text#1{{\rm {\rm #1}}}
 \def\1{\mathbf{1}}
  \def\uM{\mathbf{uM}}
\begin{document}

\title[Rota-Baxter systems]{Rota-Baxter systems, dendriform algebras and covariant bialgebras}

\author{Tomasz Brzezi\'nski}
 \address{ Department of Mathematics, Swansea University, 
  Swansea SA2 8PP, U.K.} 
  \email{T.Brzezinski@swansea.ac.uk}   
 \subjclass[2010]{16T05;16T25} 
 \keywords{}
 
\begin{abstract}
A generalisation of the notion of a Rota-Baxter operator is proposed. This generalisation consists of two operators acting on an associative algebra and satisfying equations similar to the Rota-Baxter equation. Rota-Baxter operators of any weights and twisted Rota-Baxter operators are solutions of the proposed system. It is shown that dendriform algebra structures of a particular kind are equivalent to Rota-Baxter systems. It is shown further that a Rota-Baxter system induces a {\em weak peudotwistor} [F.\ Panaite \& F.\ Van Oystaeyen, {\em Twisted algebras, twisted bialgebras and Rota-Baxter operators,} arXiv:1502.05327 (2015)] which can be held responsible for the existence of a new associative product on the  underlying algebra. Examples of solutions of Rota-Baxter systems are obtained from quasitriangular covariant bialgebras hereby introduced as a natural extension of {\em infinitesimal bialgebras} [M.\ Aguiar, {\em Infinitesimal Hopf algebras,} [in:]  {\em New trends in Hopf algebra theory (La Falda, 1999)}, Contemp.\ Math., {\bf 267}, Amer.\ Math.\ Soc., Providence, RI, (2000), pp.\ 1--29].
\end{abstract}
\maketitle

\section{Introduction}
This paper arose form an attempt to understand the Jackson $q$-integral as a Rota-Baxter operator, and develops and extends connection between three algebraic systems: Rota-Baxter algebras \cite{Rot:Bax}, dendriform algebras \cite{Lod:dia} and infinitesimal bialgebras \cite{Agu:inf}. 

Given an associative algebra $A$ over a field $\KK$ and $\lambda\in \KK$, a linear operator $R:A\to A$ is called a {\em Rota-Baxter operator of weight $\lambda$} if, for all $a,b\in A$,
\begin{equation}\label{RB.lambda}
R(a)R(b) = R\left(R(a) b + a R(b) +\lambda ab\right).
\end{equation} 
In this case the triple $(A,R,\lambda)$ is referred to as a {\em Rota-Baxter algebra of weight $\lambda$}. Rota-Baxter operators were introduced in \cite{Bax:ana} in the context of differential operators on commutative Banach algebras and since \cite{Rot:Bax} intensively studied in probability and combinatorics, and more recently in the theory of operads and renormalisation of quantum field theories. 

Recall from \cite[Section~5]{Lod:dia} that a {\em dendriform algebra} is a system consisting of a vector space $V$ and two bilinear operations $\prec,\succ$ on $V$ such that, for all $a,b,c\in V$,
\begin{subequations}\label{dend}
\begin{gather}
(a\prec b)\prec c = a\prec (b\prec c + b\succ c) \label{dend.1}\\
a\succ (b\prec c) = (a\succ b)\prec c \label{dend.2}\\
a\succ (b\succ c) =(a\prec b + a\succ b)\succ c \label{dend.3}
\end{gather}
\end{subequations}
As explained in \cite{Agu:pre}, \cite{Ebr:Lod}, \cite{EbrMan:den} every Rota-Baxter operator of weight $\lambda$ on an algebra $A$ defines a dendriform algebra structure on $A$ in a variety of ways including 
\begin{equation}\label{RB.dend}
a\succ  b := R(a) b, \qquad a\prec b := a(R(b) +\lambda b).
\end{equation}

An associative (not necessarily unital) algebra $A$ that admits a coassociative comultiplication which is a derivation is called an {\em infinitesimal bialgebra} \cite{Agu:inf}. That is, in addition to the coassociative law, the comultiplication $\Delta:A\to A\ot A$ satisfies 
\begin{equation}\label{deriv}
\Delta(ab) = a\Delta(b) + \Delta(a)b, \qquad \mbox{for all $a,b\in A$,}
\end{equation}
where $A\ot A$ is viewed as an $A$-bimodule in the standard way $a\cdot (b\ot c) \cdot d = ab\ot cd$. As shown in \cite{Agu:inf}, if an element $r\in A\ot A$ satisfies the associative classical Yang-Baxter equation, 
\begin{equation}\label{YB.ass}
r_{13}r_{12} - r_{12}r_{23} +r_{23}r_{13} = 0
\end{equation}
(see \eqref{legs} below for the explanation of the index notation used),
then the inner derivation induced by $r$ (i.e. a commutator with $r$) is coassociative, and thus defines on $A$ the structure of an infinitesimal bialgebra (called a {\em quaistriangular infinitesimal bialgebra}). Furthermore, it is proven  in \cite{Agu:info} that every solution of the associative classical Yang-Baxter equation defines a Rota-Baxter operator of weight $0$.

In this article we propose to study two operators $R,S$, both acting on the same associative algebra $A$, that satisfy equations similar to the Rota-Baxter equation \eqref{RB.lambda}  with $\lambda =0$. We show that in the case of a non-degenerate algebra (see the explanation below) this system of equations is equivalent to the existence of a dendriform algebra structure of the type given in \eqref{RB.dend} (with $R(b) + \lambda b$ replaced by $S(b)$). A class of solutions to Rota-Baxter systems arise from {\em associative Yang-Baxter pairs}. These are defined as pairs of elements $r,s\in A\ot A$, which satisfy two equations similar to the classical associative Yang-Baxter equation \eqref{YB.ass}: each equation involves both $r$ and $s$ albeit not in a symmetric way. In order to give a conceptual grounding for associative Yang-Baxter pairs we relate them to {\em covariant bialgebras}. In this class of algebras the coproduct is no longer assumed to be a derivation (as is the case for infinitesimal bialgebras) but a covariant derivation (or a connection) with respect to a pair of derivations. This relaxing of the definition of an infinitesimal bialgebra allows us to include other types of algebras characterised by the existence of a coproduct, such as Frobenius algebras. We give a characterisation and some examples of bicovariant bialgebras, most significantly we show that an associative Yang-Baxter pair defines a covariant bialgebra, termed a {\em quasitriangular covariant bialgebra}. We define and analyse basic properties of the representation category of a covariant bialgebra, termed the category of {\em covariant modules}. Finally, we introduce the notion of an {\em endomorphism twisted Rota-Baxter operator}, and observe that such an operator induces a Rota-Baxter system, and that the Jackson's $q$-integral is an example of a twisted Rota-Baxter operator.

 All algebras considered in this paper are associative (but not necessarily unital)  over a field $\KK$. We say that an algebra $A$ is {\em non-degenerate} provided that for any $b\in A$,  if $ba=0$ or $ab=0$ for all $a\in A$ implies that $b=0$. Obviously, any unital algebra is non-degenerate. Given $r = \sum_i a_i\ot b_i \in A\ot A$, we define
\begin{equation}\label{legs}
r_{12} = 1\ot r, \qquad r_{13} = \sum_i a_i\ot 1\ot b_i, \qquad r_{23} = 1\ot r,
\end{equation}
where $1$ means either the identity of $A$ (if $A$ is unital) or the identity in the extended unital algebra $\KK\oplus A$ (if $A$ is non-unital).

\section{Rota-Baxter systems and dendriform algebras}
In this section we define Rota-Baxter systems and relate them to dendriform algebras.

\label{sec.R-B}\setcounter{equation}{0}
\begin{definition}\label{def.R-B}
A triple $(A,R,S)$ consisting of an algebra $A$ and two $\KK$-linear operators $R,S:A\to A$ is called a {\em Rota-Baxter system} if, for all $a,b\in A$,
\begin{subequations}\label{RB}
\begin{gather}
R(a)R(b) = R\left(R(a) b + a S(b)\right) , \label{RB.r}\\
S(a)S(b) = S\left(R(a) b + a S(b)\right) . \label{RB.s}
\end{gather}
\end{subequations}
\end{definition}

Rota-Baxter operators \eqref{RB.lambda} together with algebras on which they operate are examples of Rota-Baxter systems as explained in the following
\begin{lemma}
Let $A$ be an algebra.
 If $R$ is a Rota-Baxter operator of weight $\lambda$ on $A$, then $(A,R,R+\lambda\id)$ and $(A,R+\lambda\id, A)$ are Rota-Baxter systems.
\end{lemma}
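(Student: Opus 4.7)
The plan is a direct verification: in each case, substitute the proposed operators into the two equations \eqref{RB.r} and \eqref{RB.s}, and reduce the identities to the single Rota-Baxter equation \eqref{RB.lambda} of weight $\lambda$.

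For the first claim, set $S := R + \lambda\,\id$. I would first observe that for all $a,b \in A$,
\[
R(a)b + aS(b) = R(a)b + aR(b) + \lambda ab,
\]
so that equation \eqref{RB.r} becomes exactly \eqref{RB.lambda} and thus holds. Then, to check \eqref{RB.s}, I would expand
\[
S(a)S(b) = \bigl(R(a)+\lambda a\bigr)\bigl(R(b)+\lambda b\bigr) = R(a)R(b) + \lambda R(a)b + \lambda a R(b) + \lambda^2 ab,
\]
and compare with
\[
S\bigl(R(a)b + aS(b)\bigr) = R\bigl(R(a)b + aR(b) + \lambda ab\bigr) + \lambda\bigl(R(a)b + aR(b) + \lambda ab\bigr),
\]
which by \eqref{RB.lambda} again simplifies to $R(a)R(b) + \lambda R(a)b + \lambda aR(b) + \lambda^2 ab$.

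For the second claim (reading the triple as $(A, R+\lambda\,\id, R)$), I would perform the symmetric computation: set $R' := R+\lambda\,\id$ and $S' := R$, observe that $R'(a)b + aS'(b) = R(a)b + aR(b) + \lambda ab$ is again the argument of $R$ in \eqref{RB.lambda}, and verify \eqref{RB.r} and \eqref{RB.s} for the pair $(R',S')$ by the same expansion. Each identity reduces, after grouping the $\lambda$-terms, to a single application of the Rota-Baxter identity for $R$.

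There is no genuine obstacle here: the only thing to be careful about is the bookkeeping of the $\lambda$- and $\lambda^2$-terms when expanding $S(a)S(b)$ and $S\bigl(R(a)b + aS(b)\bigr)$, and the observation that in both triples the common expression $R(a)b + aR(b) + \lambda ab$ appears as the argument to which \eqref{RB.lambda} is applied.
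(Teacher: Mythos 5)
Your proof is correct and follows essentially the same route as the paper: substitute $S=R+\lambda\,\id$ so that \eqref{RB.r} becomes \eqref{RB.lambda}, then verify \eqref{RB.s} by expanding; for the second triple (which you rightly read as $(A,R+\lambda\,\id,R)$, the ``$A$'' in the statement being a typo for $R$) the paper merely shortcuts your direct expansion by noting $(R(a)+\lambda a)b+aR(b)=R(a)b+a(R(b)+\lambda b)$ and appealing to the $R$--$S$ symmetry of the definition, which is the same bookkeeping you carry out explicitly.
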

\begin{proof} The replacement of $S$ by $R+\lambda\id$ in equation \eqref{RB.r} gives precisely the Rota-Baxter relation \eqref{RB.lambda}. An elementary calculation then reveals that in this case \eqref{RB.s} reproduces \eqref{RB.r}. The second claim follows by the  fact that
$$
(R(a) +\lambda a) b + aR(b) = R(a) b + a(R(b)+\lambda b),
$$
compiled with the $R$-$S$-symmetry of Definition~\ref{def.R-B}.
\end{proof}

\begin{lemma}\label{lem.linear}
Let $A$ be an algebra, $R:A\to A$ a left $A$-linear map and $S:A\to A$ a right $A$-linear map. Then $(A,R,S)$ is a Rota-Baxter system if and only if, for all $a,b\in A$,
\begin{equation}\label{RB.linear}
aR\circ S(b) = 0 = S\circ R(a) b.
\end{equation}
In particular, if $A$ is a non-degenerate algebra, then  $(A,R,S)$ is a Rota-Baxter system (with $A$ left and $S$ right $A$-linear) if and only if $R$ and $S$ satisfy the orthogonality condition
\begin{equation}\label{RB.linear.non}
R\circ S = S\circ R =0.
\end{equation}
\end{lemma}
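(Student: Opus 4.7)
The plan is to unfold the two Rota-Baxter system equations using the linearity hypotheses and simplify. First I would rewrite the right-hand side of \eqref{RB.r} by splitting $R(R(a)b + aS(b))$ into $R(R(a)b) + R(aS(b))$. Invoking the left $A$-linearity of $R$, these summands become $R(a)R(b)$ and $aR\circ S(b)$ respectively. Thus \eqref{RB.r} collapses to the identity
\[
R(a)R(b) = R(a)R(b) + aR\circ S(b),
\]
which is equivalent to the left-hand half $aR\circ S(b) = 0$ of \eqref{RB.linear}. Symmetrically, I would rewrite the right-hand side of \eqref{RB.s} as $S(R(a)b) + S(aS(b))$; using the right $A$-linearity of $S$ these become $S\circ R(a)\cdot b$ and $S(a)S(b)$, so \eqref{RB.s} reduces to the remaining equation $S\circ R(a)\cdot b = 0$. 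Reading the argument in reverse shows that \eqref{RB.linear} implies both \eqref{RB.r} and \eqref{RB.s}, which establishes the first equivalence.

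For the second equivalence, under the non-degeneracy assumption, I would note that the identity $aR\circ S(b)=0$ for all $a\in A$ forces $R\circ S(b)=0$ (since $R\circ S(b)$ is then left-annihilated by all of $A$), and running $b$ through $A$ yields $R\circ S=0$; the analogous argument applied to $S\circ R(a)\cdot b=0$ yields $S\circ R=0$. The converse is immediate, since $R\circ S=0$ and $S\circ R=0$ trivially imply \eqref{RB.linear}.

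There is no real obstacle here: the whole lemma is a bookkeeping exercise in exploiting one-sided linearity to split the compound arguments $R(a)b + aS(b)$ inside $R$ and $S$. The only point to be careful about is the direction of the linearity — $R$ absorbs left multiplications but leaves right ones intact (contributing the $aR\circ S(b)$ term), while $S$ does the opposite (contributing the $S\circ R(a)\cdot b$ term) — and to flag that non-degeneracy is used only to pass from elementwise annihilation to the operator identity \eqref{RB.linear.non}.
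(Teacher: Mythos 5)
Your proposal is correct and follows essentially the same route as the paper: expand each Rota-Baxter identity using the one-sided linearity of $R$ and $S$ to reduce \eqref{RB.r} to $aR\circ S(b)=0$ and \eqref{RB.s} to $S\circ R(a)b=0$, note the steps reverse, and then use non-degeneracy to upgrade the elementwise conditions to \eqref{RB.linear.non}. Nothing is missing.
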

\begin{proof}
If $(A,R,S)$ is a Rota-Baxter system, then, by the left $A$-linearity of $R$, for all $a,b\in A$,
$$
R(a)R(b) = R\left(R(a) b + a S(b)\right) = R(a)R(b) + aR(S(b)),
$$
hence the first of equations \eqref{RB.linear}. In a similar way using the right $A$-linearity of $S$ and $\eqref{RB.s}$ one obtains the second of equations \eqref{RB.linear}. Tracing the above steps backwards one immediately realises that \eqref{RB.linear} implies \eqref{RB}.

The second statement is a straightforward consequence of the first one and the definition of a non-degenerate algebra.
\end{proof}

\begin{example}\label{ex.linear}
Suppose that $r,s\in A$ are such that $rs=0$, with one of them, say $s$, being central. Define  $R,S:A\to A$ by
$$
R: a\mapsto ar, \qquad S: a\mapsto sa.
$$
Then, for all $a,b\in A$,
$$
aR(S(b)) = asbr = abrs = 0, \qquad S(R(a))b = sarb = arsb =0,
$$
by the centrality of $s$ and since $rs=0$. Hence $(A,R,S)$ is a Rota-Baxter system.

As a specific example, take a truncated polynomial algebra $A=\KK[\zeta]/\langle \zeta^n\rangle$, and let $(p,q)$ be a partition of $n$. Define $R_p(a) = a\zeta^p$, $S_q(a) = a\zeta^q$. Then $(A,R_p,S_q)$ is a Rota-Baxter system.

For a more geometric example, consider $A$ to be a coordinate algebra of the algebraic variety consisting of two straight lines crossing at one point, and $R$ and $S$ to arise from projections on the first and the second line respectively. Then $(A,R,S)$ is a Rota-Baxter system.
\end{example}

\begin{proposition}\label{prop.dend}
Let $A$ be an associative algebra and let $R,S:A\to A$ be $\KK$-linear homomorphisms. Define $\KK$-linear maps $\prec,\succ: A\ot A\to A$, by
\begin{equation}\label{oper.dend}
a \prec b = aS(b), \qquad a\succ b = R(a)b, \qquad \mbox{for all $a,b\in A$.}
\end{equation}
Then
\begin{zlist} 
\item If $(A,R,S)$ is a Rota-Baxter system then  $(A, \prec, \succ)$ is a dendriform algebra.
\item If $A$ is non-degenerate and $(A, \prec, \succ)$ is a dendriform algebra, then $(A,R,S)$ is a Rota-Baxter system.
\end{zlist}
\end{proposition}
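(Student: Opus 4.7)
The plan is a direct verification: expand each dendriform axiom in terms of $R$, $S$ and multiplication in $A$, and match the resulting identities against the two Rota-Baxter system equations \eqref{RB.r}--\eqref{RB.s}. The structure should show that axiom \eqref{dend.3} corresponds to \eqref{RB.r}, axiom \eqref{dend.1} corresponds to \eqref{RB.s}, and axiom \eqref{dend.2} is automatic from the associativity of $A$.

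For part (1), assuming $(A,R,S)$ is a Rota-Baxter system, I would check the three axioms in turn. For \eqref{dend.3}, the left-hand side computes to $R(a)R(b)c$ and the right-hand side to $R(R(a)b + aS(b))c$, and these agree by \eqref{RB.r} multiplied on the right by $c$. For \eqref{dend.1}, both sides become $aS(b)S(c)$ and $aS(R(b)c + bS(c))$ respectively, which agree by multiplying \eqref{RB.s} on the left by $a$. The middle axiom \eqref{dend.2} reduces to $R(a)(bS(c)) = (R(a)b)S(c)$, which is simply associativity of $A$ and uses no properties of $R$, $S$.

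For part (2), the non-degeneracy hypothesis is needed precisely to cancel the free variables $a$ and $c$ that appear in \eqref{dend.1} and \eqref{dend.3}. Assuming $(A,\prec,\succ)$ is dendriform, axiom \eqref{dend.3} rewrites as $\bigl(R(a)R(b) - R(R(a)b + aS(b))\bigr)c = 0$ for all $c\in A$, so non-degeneracy forces \eqref{RB.r}. Similarly, axiom \eqref{dend.1} rewrites as $a\bigl(S(b)S(c) - S(R(b)c + bS(c))\bigr) = 0$ for all $a\in A$, and non-degeneracy yields \eqref{RB.s}. Axiom \eqref{dend.2} contributes no information, consistent with the fact that \eqref{RB.r}--\eqref{RB.s} are the only constraints.

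No serious obstacle is anticipated; the whole argument is a routine unravelling of definitions. The only conceptual point worth highlighting, and the one the reader may want to see spelled out, is the asymmetry in how non-degeneracy is used in part (2): the cancellation in \eqref{dend.3} is of a right-factor, while that in \eqref{dend.1} is of a left-factor, so both halves of the non-degeneracy assumption (left and right) are genuinely needed.
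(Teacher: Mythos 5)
Your proposal is correct and follows essentially the same route as the paper: verify \eqref{dend.1} and \eqref{dend.3} from \eqref{RB.s} and \eqref{RB.r} respectively (with \eqref{dend.2} being plain associativity), and use non-degeneracy to cancel the spare factor in the converse direction. The remark on the left/right asymmetry of the cancellation is a sensible elaboration of the paper's one-line appeal to non-degeneracy, which by definition covers both sides.
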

\begin{proof}
(1) Assume that  $(A,R,S)$ is a Rota-Baxter system. Then, for all $a,b,c\in A$,
$$
(a\prec b)\prec c = aS(b)S(c) = aS\left(R(b) c + b S(c)\right) = a\prec (b\prec c + b\succ c),
$$
by \eqref{RB.s}. In a similar way \eqref{RB.r} implies \eqref{dend.3}. Finally, \eqref{dend.2} follows by  the associativity of $A$.

(2) In the converse direction, assume that  $(A, \prec, \succ)$, with $\prec, \succ$ given by \eqref{oper.dend} is a dendriform algebra. Then  the dendriform relation \eqref{dend.3} comes out as 
$$
\left(R(a)R(b) - R\left(R(a) b + a S(b)\right)\right)c =0,
$$
and hence it  gives \eqref{RB.r} by the non-degeneracy of the product in $A$. In a similar manner,  \eqref{dend.1} gives \eqref{RB.s}.
\end{proof}

\begin{remark}\label{rem.cat}
A morphism of Rota-Baxter systems from $(A,R_A,S_A)$ to $(B,R_B,S_B)$ is an algebra map $f:A\to B$ rendering the following diagrams commutative:
$$
\xymatrix{A\ar[rr]^{R_A}\ar[d]_{f} & & A\ar[d]^f\\ B\ar[rr]^{R_B} & & B,} \qquad \xymatrix{A\ar[rr]^{S_A}\ar[d]_{f} & & A\ar[d]^f\\ B\ar[rr]^{S_B} & & B.}
$$
The assignment of a dendriform algebra to a Rota-Baxter system described in Proposition~\ref{prop.dend} defines a faithful functor from the category of Rota-Baxter systems to the category of dendriform algebras.
\end{remark}

\begin{corollary}\label{cor.RB}
Let $(A,R,S)$ be a Rota-Baxter system. Then
\begin{zlist}
\item $(A,*)$ with $*: A\ot A\to A$ defined by
\begin{equation}\label{prod*}
a*b = R(a)b+aS(b), \qquad \mbox{for all $a,b\in A$,}
\end{equation}
is an associative algebra.
\item $(A, \bullet)$ with $\bullet:A\ot A\to A$,
\begin{equation}\label{prod.}
a\bullet b = R(a)b-bS(a), \qquad \mbox{for all $a,b\in A$,}
\end{equation}
is a pre-Lie  algebra.
\end{zlist}
\end{corollary}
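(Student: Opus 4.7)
The plan is to reduce both claims to standard facts about dendriform algebras via Proposition~\ref{prop.dend}(1). That proposition equips $(A,R,S)$ with a dendriform structure $a\prec b=aS(b)$, $a\succ b=R(a)b$, and the two operations in the statement can be rewritten as
\[
a*b\;=\;a\succ b+a\prec b,\qquad a\bullet b\;=\;a\succ b-b\prec a.
\]
Hence (1) asserts associativity of the total dendriform product, while (2) asserts that the skew combination is pre-Lie (left-symmetric); both are classical for any dendriform algebra and can be deduced purely formally from axioms \eqref{dend.1}--\eqref{dend.3}. Alternatively, one can give a direct verification from \eqref{RB.r}--\eqref{RB.s}, which is the route I would take for (1) and outline below for (2).

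For (1) I would simply expand both sides of $(a*b)*c=a*(b*c)$ as four summands each. On the left, the summand $R(R(a)b+aS(b))c$ collapses via \eqref{RB.r} to $R(a)R(b)c$; on the right, the summand $aS(R(b)c+bS(c))$ collapses via \eqref{RB.s} to $aS(b)S(c)$. The two mixed terms $R(a)bS(c)$ appear on each side, and the remaining pair $R(a)R(b)c$ versus $aS(b)S(c)$ match their counterparts once associativity of the underlying product of $A$ is invoked. No cross-cancellation between groups of terms is needed, so the argument is essentially a one-line application of each Rota-Baxter equation.

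For (2) I would compute the associator $\mathrm{ass}(a,b,c):=(a\bullet b)\bullet c-a\bullet(b\bullet c)$ directly, substituting $x\bullet y=R(x)y-yS(x)$ throughout, and then eliminating every occurrence of $R\circ R$ or $S\circ S$ using \eqref{RB.r} and \eqref{RB.s}. After collecting surviving terms and normalising brackets by associativity of $A$, the resulting expression should be manifestly invariant under the swap $a\leftrightarrow b$, which is the defining identity of a pre-Lie algebra. The more conceptual variant is to verify once and for all that on any dendriform algebra the operation $a\triangleright b=a\succ b-b\prec a$ is pre-Lie, using each of \eqref{dend.1}, \eqref{dend.2}, \eqref{dend.3} exactly once when the associator is symmetrised in its first two arguments.

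The only real obstacle I anticipate is bookkeeping in (2): the defining formula for $\bullet$ is not symmetric in its two arguments and mixes $R$ acting on the left with $S$ acting on the right, so signs and orientations interact nontrivially when the eight-term associator is reorganised. The substitution-based route outlined above concentrates this difficulty into a single algebraic manipulation and is, in my view, the shorter of the two approaches; the dendriform detour is conceptually cleaner but requires the preliminary observation that the pre-Lie identity for $\succ - \prec^{\mathrm{op}}$ holds on any dendriform algebra.
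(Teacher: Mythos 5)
Your proposal is correct and its first route is precisely the paper's proof: the paper notes that $a\succ b=R(a)b$ and $a\prec b=aS(b)$ are the dendriform operations of Proposition~\ref{prop.dend} and then simply cites \cite[5.2~Lemma]{Lod:dia} for the associativity of $\succ+\prec$ and \cite[Lemma~13.6.4]{LodVal:alg} for the pre-Lie property of $a\succ b-b\prec a$. Your direct verification from \eqref{RB.r}--\eqref{RB.s} is a sound unpacking of those two classical facts, but it introduces no genuinely different idea.
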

\begin{proof}
Since $a\ot b \mapsto R(a)b$ and $a\ot b\mapsto aS(b)$ are dendriform operations, assertion (1) follows by \cite[5.2~Lemma]{Lod:dia} and (2) follows by \cite[Lemma~13.6.4]{LodVal:alg}.
\end{proof}

Another way of understanding the associativity of product \eqref{prod*} is by connecting Rota-Baxter systems with recently introduced {\em weak pseudotwistors} \cite{PanVan:twi}.
\begin{definition}\label{def.twistor}
Let $A$ be an algebra with associative product $\mu:A\ot A\to A$. A $\KK$-linear map $T:A\ot A\to A\ot A$ is called a {\em weak pseudotwistor} if there exists a $\KK$-linear map $\Tt:A\ot A\ot A\to A\ot A\ot A$, rendering commutative the following diagram:
\begin{equation}\label{bow-tie}
\xymatrix{
&A\ot  A\ot  A 
\ar[r]^-{\id \otimes \mu} & A\ot  A 
\ar[dd]^{T}  & A\ot  A\ot  A \ar[l]_-{\mu \ot \id} &\\
A\ot A\ot  A \ar[ur]^{\id \ot T} \ar[dr]_{\Tt} & & &  & A\ot  A\ot  A \ar[ul]_{T\ot \id}
\ar[dl]^{\Tt}\\
&   A\ot  A\ot  A \ar[r]_-{\id \ot \mu} &  A\ot  A & A\ot A\ot  A. \ar[l]^-{\mu\ot\id}  &}
\end{equation}
The map $\Tt$ is called a {\em weak companion} of $T$.
\end{definition}
\begin{lemma}\label{lem.twistor}
Let $(A,R,S)$ be a Rota-Baxter system. Then 
\begin{equation}\label{RBtwistor}
T: A\ot A\to A\ot A, \qquad a\ot b \mapsto R(a)\ot b + a\ot S(b),
\end{equation}
is a weak pseudotwistor.
\end{lemma}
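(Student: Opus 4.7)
The plan is to produce an explicit weak companion $\Tt$ by guessing its form from the structure of $T$ and the Rota-Baxter system axioms, then to verify both triangles of the bowtie diagram \eqref{bow-tie} by direct computation.

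First I would unpack the two composites that constitute the outer paths of \eqref{bow-tie}. For $a,b,c\in A$, a direct computation using only the definition $T(x\ot y)=R(x)\ot y + x\ot S(y)$ gives
\[
T\circ(\id\ot\mu)\circ(\id\ot T)(a\ot b\ot c) = R(a)\ot R(b)c + a\ot S(R(b)c) + R(a)\ot bS(c) + a\ot S(bS(c)),
\]
and similarly
\[
T\circ(\mu\ot\id)\circ(T\ot\id)(a\ot b\ot c) = R(R(a)b)\ot c + R(a)b\ot S(c) + R(aS(b))\ot c + aS(b)\ot S(c).
\]
Applying \eqref{RB.s} to combine the second and fourth terms of the first expression (i.e.\ $S(R(b)c)+S(bS(c))=S(b)S(c)$) simplifies the left-hand composite to
\[
R(a)\ot R(b)c + R(a)\ot bS(c) + a\ot S(b)S(c),
\]
while \eqref{RB.r} applied to the first and third terms of the second expression gives
\[
R(a)R(b)\ot c + R(a)b\ot S(c) + aS(b)\ot S(c).
\]

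The common ``three-term'' shape on both sides suggests the candidate
\[
\Tt(a\ot b\ot c) := R(a)\ot R(b)\ot c + R(a)\ot b\ot S(c) + a\ot S(b)\ot S(c).
\]
Then the last step is to observe, by direct inspection, that $(\id\ot\mu)\Tt$ reproduces the simplified left-hand composite, and $(\mu\ot\id)\Tt$ reproduces the simplified right-hand composite. This yields the commutativity of both triangles in \eqref{bow-tie} simultaneously, so $\Tt$ is a weak companion of $T$ and $T$ is a weak pseudotwistor.

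The only real obstacle is guessing $\Tt$, since once the right formula is in hand the verification is purely mechanical. The guess is motivated by the observation that the Rota-Baxter system axioms \eqref{RB.r}--\eqref{RB.s} are precisely what is needed to collapse the ``cross terms'' $S(R(b)c)+S(bS(c))$ and $R(R(a)b)+R(aS(b))$ into a single product $S(b)S(c)$ and $R(a)R(b)$ respectively; this pattern forces the three-term combination $R(a)\ot R(b)\ot c + R(a)\ot b\ot S(c) + a\ot S(b)\ot S(c)$, which is the unique natural lift that becomes each simplified composite after multiplication of the appropriate adjacent factors.
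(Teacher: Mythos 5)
Your proposal is correct and follows essentially the same route as the paper: you arrive at exactly the companion $\Tt(a\ot b\ot c) = R(a)\ot R(b)\ot c + R(a)\ot b\ot S(c) + a\ot S(b)\ot S(c)$ used there, and verify the two pentagons of \eqref{bow-tie} by the same application of \eqref{RB.r} and \eqref{RB.s}. The only difference is expository: you motivate the guess by first simplifying the outer composites, whereas the paper states $\Tt$ outright and checks it.
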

\begin{proof}
Define a $\KK$-linear map:
$
\Tt: A\ot A\ot A\to A\ot A\ot A, 
$
by
\begin{equation}\label{comp}
 \Tt(a\ot b\ot c) =R(a)\ot R(b)\ot c + R(a) \ot b \ot S(c) + a \ot S(b)\ot S(c).
\end{equation}
Then
\begin{eqnarray*}
(\mu\ot \id) \circ \Tt(a\ot b \ot c) &=& R(a)R(b)\ot c + R(a)b \ot S(c) + aS(b)\ot S(c)\\
&=& R(R(a)b)\ot c + R(aS(b)) \ot c \\
&& + R(a)b \ot S(c) + aS(b)\ot S(c)\\
&=& T\circ (\mu\ot \id) \circ(T\ot \id)(a\ot b \ot c),
\end{eqnarray*}
by \eqref{RB.r}. Thus the right pentagon in \eqref{bow-tie} is commutative. Similarly, \eqref{RB.s} renders the left pentagon commutative, and $T$ is a pseudotwistor with companion $\Tt$.
\end{proof}

Note that the product $*$ defined by \eqref{prod*} is simply equal to $\mu\circ T$, where $T$ is given in \eqref{RBtwistor}. Hence $*$ is associative by \cite[Theorem~2.3]{PanVan:twi}. Also note that, similarly to the case of dendriform algebras, if $A$ is non-degenerate, then $T$ given by \eqref{RBtwistor} is a weak pseudotwistor with the companion \eqref{comp} if and only if $(A,R,S)$ is a Rota-Baxter system.

The remainder of this article is devoted to presentation of examples of Rota-Baxter systems and to placing them within a comprehensive algebraic framework.

\section{Covariant bialgebras}
\label{sec.cov}
This section is divided into four parts. In the first part a system of equations is given, whose solution leads to a Rota-Baxter system. In the second, the system introduced in the first is given a more conceptual grounding, based on the ideas developed in \cite{Agu:inf}. In the third part a representation category for covariant bialgebras  introduced in the second one is studied. Finally, in the fourth part some comments on extensions of the results of the first two parts to the non-commutative base are made.

\subsection{Associative Yang-Baxter pairs}\label{sec.rs}
\begin{definition}\label{def.rs}
Let $A$ be an associative algebra. An {\em associative  Yang-Baxter pair} is a pair of elements $r,s \in A\ot A$ that satisfy the following equations
\begin{subequations}\label{rs}
\begin{gather}
r_{13}r_{12} - r_{12}r_{23} +s_{23}r_{13} = 0, \label{rs.r}\\
s_{13}r_{12} - s_{12}s_{23} +s_{23}s_{13} = 0, \label{rs.s}
\end{gather}
\end{subequations}
where $r_{12} = r\ot 1$, $r_{23} = 1\ot r$, etc., see \eqref{legs}.
\end{definition}

\begin{lemma}\label{lem.homo}
Let $f:A \to B$ be an algebra homomorphism. If $(r,s)$ is an associative Yang-Baxter pair in $A$, then 
\begin{equation}\label{rfsf}
r^f := (f\ot f)\circ r \quad \mbox{and}  \quad  s^f:=(f\ot f)\circ s
\end{equation}
 form an associative Yang-Baxter pair in $B$.
\end{lemma}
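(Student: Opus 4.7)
The plan is a direct transport of the two associative Yang-Baxter equations along the homomorphism $f$. Since $f \colon A \to B$ is an algebra map, the induced $\KK$-linear map $f \otimes f \otimes f \colon A\ot A\ot A \to B\ot B\ot B$ is an algebra homomorphism with respect to the componentwise product. I would apply $f\otimes f\otimes f$ term by term to each of the equalities \eqref{rs.r} and \eqref{rs.s}, and verify that the images of the legs are the legs of the transported tensors.

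The key verification step is the compatibility of $f^{\otimes 3}$ with the leg notation \eqref{legs}. Writing $r = \sum_i a_i \ot b_i$, one has
\begin{equation*}
(f\ot f\ot f)(r_{12}) = \sum_i f(a_i)\ot f(b_i) \ot 1 = r^f_{12},
\end{equation*}
and likewise $(f\ot f\ot f)(r_{23}) = r^f_{23}$, $(f\ot f\ot f)(r_{13}) = r^f_{13}$, with analogous identities for $s$. Combined with the fact that $f^{\otimes 3}$ preserves the product of any two such legs, applying $f^{\otimes 3}$ to \eqref{rs.r} yields $r^f_{13}r^f_{12} - r^f_{12}r^f_{23} + s^f_{23}r^f_{13} = 0$, and similarly \eqref{rs.s} transports to the corresponding identity for $(r^f, s^f)$.

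The only point that requires mild care is the appearance of the symbol $1$ in \eqref{legs} when either $A$ or $B$ lacks a unit. In that case I would first extend $f$ to the canonical unital algebra map $\tilde f \colon \KK\oplus A \to \KK\oplus B$, $(\lambda, a) \mapsto (\lambda, f(a))$, perform the above verification inside $(\KK\oplus B)^{\otimes 3}$, and then observe that each of the six terms appearing in the transported equations lies in $B\ot B\ot B$, so the identity holds in $B\ot B\ot B$ as required.

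I expect no genuine obstacle here: the statement is essentially the functoriality of the construction $(r,s)\mapsto (r^f,s^f)$, and the proof amounts to the routine observation that homomorphisms respect both tensor products and algebra multiplications. The only slightly subtle issue is the handling of the formal unit in the non-unital case, which is dispatched by passing to the unitization as above.
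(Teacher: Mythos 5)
Your proof is correct and is essentially the paper's argument: the paper disposes of this lemma with the single sentence that it ``follows immediately from the multiplicativity of $f$,'' and your write-up simply makes explicit the compatibility of $f^{\otimes 3}$ with the leg notation and the unitization step in the non-unital case. No divergence in approach, just more detail.
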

\begin{proof}
This follows immediately from the multiplicativity of $f$.
\end{proof}

\begin{example}\label{ex.YB}~

(1) The couple $(r,r)$ is an associative Yang-Baxter pair if and only if $r$ is a solution to the classical associative Yang-Baxter equation \eqref{YB.ass}.

(2) If $r\in A\ot A$ is a solution to the {\em Frobenius-separability} or {\em FS-equation},
\begin{equation}\label{FS}
r_{12}r_{23} = r_{23}r_{13} = r_{13}r_{12},
\end{equation}
\cite[Lemma~3.2]{BeiFon:Fro}, \cite[Section~8.2]{CaeMil:Fro}, then $(r,0)$ and $(0,r)$ are associative Yang-Baxter pairs.

(3) If $r,s \in A\ot A$ are solutions to the FS-equation \eqref{FS} such that \begin{equation}\label{orto}
s_{23}r_{13} = s_{13}r_{12} = 0,
\end{equation}
 then $(r,s)$ is an associative Yang-Baxter pair.

For a specific example, let $A= M_m(\KK)\oplus M_n(\KK)$ be the direct sum of matrix algebras thought of as block-diagonal matrices in $M_{m+n}(\KK)$. Fix $k\in \{1,\ldots , m\}$ and $l\in \{m+1,\ldots , m+n\}$ and define
\begin{equation}\label{rs.matrix}
r = \sum_{i=1}^m e_{ik}\ot e_{ki}, \qquad s = \sum_{j=m+1}^{m+n} e_{jl}\ot e_{lj},
\end{equation}
where   $e_{ij}$ are the matrices with $1$ in the $(i,j)$-th entry and  $0$ elsewhere. Then  $(r,s)$ is an associative Yang-Baxter pair.

(4) Examples (2) and (3) can be modified by splitting the FS-equation into two equations
\begin{equation}\label{as.coa}
 r_{13}r_{12} = r_{12}r_{23}, \qquad s_{12}s_{23} = s_{23}s_{13}
 \end{equation}
 If $r,s$ satisfy \eqref{as.coa}, then both $(r,0)$ and $(0,s)$ are associative Yang-Baxter pairs. If, in addition, $r,s$ satisfy the orthogonality condition \eqref{orto}, then$(r,s)$ is an associative Yang-Baxter pair.
 
As explained in \cite[Section~8.2]{CaeMil:Fro}, the first of equations \eqref{as.coa} can be interpreted as an associativity condition for a particular multiplication defined on $V\otimes V$, where $V$ is a finite dimensional vector space. Dually, the second of equations \eqref{as.coa} can be interpreted as a coassociativity condition for a particular comultiplication defined on a unital associative algebra \cite[Propositon~153]{CaeMil:Fro}. From a different perspective the first of conditions \eqref{as.coa} can be viewed  as the associative version of the integrability or flatness condition for the Knizhnik-Zamolodchikov connection; see e.g.\ \cite[Chapter~XIX]{Kas:gro} or \cite[Chapter~12]{ShnSte:gro}.

(5) Suppose that $A$ contains $g,h$ such that $g^2 =0$ and $gh=hg=0$. Then $r=g\ot h$, $s=h\ot g$ is an associative Yang-Baxter pair. 
\end{example}

\begin{proposition}\label{prop.YBpair}
Let $(r,s)$ be an associative Yang-Baxter pair in $A$. Write 
\begin{equation}\label{sweed}
r = \sum r\su 1\ot r\su 2, \qquad s = \sum s\su 1\ot s\su 2,
\end{equation}
(summation indices suppressed), and define
\begin{equation}\label{RS.YB}
R,S: A\to A, \qquad R(a) = \sum r\su 1a r\su 2, \quad S(a) = \sum s\su 1a s\su 2.
\end{equation}
Then $(A,R,S)$ is a Rota-Baxter system.

Furthermore, if $f:A\to B$ is an algebra map and $(r^f,s^f)$ is the associative Yang-Baxter pair induced by $f$ as in \eqref{rfsf}, then $f$ is a morphism of Rota-Baxter systems from the system associated to $(r,s)$ to the system associated to $(r^f,s^f)$.
\end{proposition}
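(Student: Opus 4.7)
The plan is to treat the associative Yang--Baxter equations \eqref{rs.r}--\eqref{rs.s} as identities in $A\ot A\ot A$ (using the unital extension $\KK\oplus A$ if needed) and to project them to $A$ by inserting $a$ and $b$ between the three tensor factors and then multiplying through. Concretely, for fixed $a,b\in A$ I would introduce the $\KK$-linear map
$$
\phi_{a,b}\colon A\ot A\ot A \longrightarrow A,\qquad x\ot y\ot z\longmapsto xayb z,
$$
and note that wherever a $1$ appears in a leg of $r_{ij}$ or $s_{ij}$, it is immediately absorbed by the adjacent $a$ or $b$, so $\phi_{a,b}$ is well defined on all terms that will appear.

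The central step is to verify the six identifications, which follow directly from the definition \eqref{RS.YB} of $R,S$ and the component-wise product in $A\ot A\ot A$:
$$
\phi_{a,b}(r_{12}r_{23}) = R(a)R(b),\quad \phi_{a,b}(r_{13}r_{12}) = R(R(a)b),\quad \phi_{a,b}(s_{23}r_{13}) = R(aS(b)),
$$
$$
\phi_{a,b}(s_{12}s_{23}) = S(a)S(b),\quad \phi_{a,b}(s_{13}r_{12}) = S(R(a)b),\quad \phi_{a,b}(s_{23}s_{13}) = S(aS(b)).
$$
For instance, computing $r_{13}r_{12}$ with two independent Sweedler-style copies of $r$ gives terms of the form $r\su 1 r\su 1 \ot r\su 2 \ot r\su 2$, and applying $\phi_{a,b}$ yields $r\su 1(r\su 1 a r\su 2 b)r\su 2 = R(R(a)b)$; the remaining five cases are strictly analogous. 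Once these are in hand, applying $\phi_{a,b}$ to \eqref{rs.r} and using $\KK$-linearity of $R$ gives $R(a)R(b) = R(R(a)b) + R(aS(b)) = R(R(a)b + aS(b))$, which is \eqref{RB.r}; equation \eqref{rs.s} produces \eqref{RB.s} in exactly the same manner. Hence $(A,R,S)$ is a Rota-Baxter system.

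For the morphism assertion, the identity
$$
R_B(f(a)) = \sum f(r\su 1)\,f(a)\,f(r\su 2) = f\Big(\sum r\su 1 a r\su 2\Big) = f(R_A(a))
$$
is immediate from the multiplicativity of $f$, and the analogous identity for $S$ follows in the same way; these are precisely the commutativity conditions of Remark \ref{rem.cat}. The step that will require the most care is the meticulous index bookkeeping behind the six identifications under $\phi_{a,b}$; once these are established, the rest reduces to $\KK$-linearity together with the two Yang-Baxter equations themselves.
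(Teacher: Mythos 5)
Your proposal is correct and follows essentially the same route as the paper: the paper likewise writes \eqref{rs} in Sweedler notation and then ``replaces the tensor product signs by $a$ and $b$'', which is exactly your map $\phi_{a,b}$, and the six identifications you list match the paper's expanded terms exactly. The morphism claim is handled identically via multiplicativity of $f$.
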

\begin{proof}
In terms of the Sweedler-like notation \eqref{sweed}, equations \eqref{rs} come out as
\begin{subequations}\label{rs.sweed}
\begin{gather}
\sum r\su 1\tilde{r}\su 1\ot \tilde{r}\su 2\ot r\su 2 - \sum r\su 1\ot r\su 2\tilde{r}\su 1\ot \tilde{r}\su2 + \sum r\su 1\ot s\su 1 \ot s\su 2r\su 2 =0,\\
\sum s\su 1r\su 1\ot  r\su 2\ot s\su 2 - \sum s\su 1\ot s\su 2\tilde{s}\su 1\ot \tilde{s}\su2 + \sum  \tilde{s}\su 1\ot s\su 1 \ot s\su 2\tilde{s}\su 2 =0,
\end{gather}
\end{subequations}
where $\sum \tilde{r}\su 1\ot \tilde{r}\su 2$,  $\sum \tilde{s}\su 1\ot \tilde{s}\su 2$ denote another copies of $r$ and $s$ respectively. Replacing tensor products in \eqref{rs.sweed} by $a$ and $b$ and using definition \eqref{RS.YB} one obtains equations \eqref{RB}, as required.

The second statement follows by the definition of $(r^f,s^f)$ and by the multiplicativity of $f$.
\end{proof}

\begin{example}\label{ex.YB.RS}~

(1) In the case of matrices of Example~\ref{ex.YB}~(3), the Rota-Baxter system associated to \eqref{rs.matrix} consists of the operator $R$, which acting on a (block-diagonal) matrix $(a_{ij})$ returns a matrix with entries $a_{kk}$ on the first $m$ diagonal places and zeros elsewhere. $S$ returns a matrix containing $a_{ll}$ in the last $n$ diagonal places as only possible non-zero entries.

(2) In the case of Example~\ref{ex.YB}~(4), the Rota-Baxter system associated to $(r,s)$ satisfying \eqref{as.coa} and \eqref{orto} will  satisfy the separated equations
$$
R(a)R(b) = R(R(a)b), \quad S(a)S(b) = S(aS(b)), \qquad S(R(a)b) = R(aS(b)) =0.
$$

(3) In the setup of Example~\ref{ex.YB}~(5), the Rota-Baxter system associated to $g$ and $h$ is
$
R(a) = gah$, $S(a)= hag.
$
\end{example}

\subsection{Covariant bialgebras}
In \cite{Agu:inf} associative classical Yang-Baxter operators were connected with infinitesimal bialgebras, i.e.\ algebras admitting a coassociative coproduct that is a derivation. Following the same line of ideas, associative Yang-Baxter pairs are related to {\em covariant bialgebras} in which coproduct is required to be a covariant derivation. The aim of this section is to introduce covariant bialgebras and to reveal this relation.
\begin{definition}\label{def.cov.der}
Let $A$ be an associative algebra and $\delta_1,\delta_2 : A\to A\ot A$ derivations (cf.\ \eqref{deriv}). 
\begin{zlist}
\item If $M$ is a right $A$-module, then a  $\KK$-linear map $\nabla:M\to M\ot A$ is called a {\em right covariant derivation} (or a {\em right connection}) with respect to $\delta_1$ if
\begin{equation}\label{left.cov}
\nabla(ma) = \nabla(m)a + m\delta_1(a), \qquad \mbox{for all $a\in A$ and $m\in M$;}
\end{equation}
\item If $M$ is a left $A$-module, then a  $\KK$-linear map $\nabla:M\to A\ot M$ is called a {\em left covariant derivation} (or a {\em left connection}) with respect to $\delta_2$ if
\begin{equation}\label{right.cov}
\nabla(am) = a\nabla(m) + \delta_2(a)m, \qquad \mbox{for all $a\in A$ and $m\in M$;}
\end{equation}
\item A $\KK$-linear map $\nabla:A\to A\ot A$ is called a {\em covariant derivation}  with respect to $(\delta_1,\delta_2)$ if it is a right covariant derivation with respect to $\delta_1$ and left covariant derivation with respect to $\delta_2$, i.e.\
\begin{equation}\label{cov}
\nabla(ab) = \nabla(a)b + a\delta_1(b) = a\nabla(b) + \delta_2(a)b,, \qquad \mbox{for all $a,b\in A$.}
\end{equation}
\end{zlist}
\end{definition}

Obviously, any derivation $\delta: A\to A\ot A$ is a covariant derivation with respect to $(\delta,\delta)$, and a covariant derivation with respect to $(0,0)$ is the same as an $A$-bimodule map $A\to A\ot A$. Note also that if $A$ is a unital algebra, then a  covariant derivation is fully determined by its value at the identity of $A$.

\begin{definition}\label{def.cov.bialg}
A {\em covariant bialgebra} is a quadruple $(A,\delta_1,\delta_2,\Delta)$, such that 
\begin{blist}
\item $A$ is an associative algebra,
\item $\delta_1,\delta_2 : A\to A\ot A$ are derivations,
\item $(A,\Delta)$ is a coassociative coalgebra such that $\Delta$ is a  covariant derivation with respect to $(\delta_1,\delta_2)$.
\end{blist}
If $A$ has identity, then a covariant bialgebra  $(A,\delta_1,\delta_2,\Delta)$ is said to be {\em unital} provided $\Delta(1) =1\ot 1$.

A morphism of covariant bialgebras is a $\KK$-linear map that is both an algebra and coalgebra map.
\end{definition}

\begin{definition}\label{def.connected}
Let $(A,\delta_1,\delta_2, \Delta)$ be a covariant bialgebra. The subalgebra
\begin{equation}\label{constant}
C(A) := \ker \delta_1 \cap \ker \delta_2,
\end{equation}
is called a {\em constant subalgebra} of $(A,\delta_1,\delta_2, \Delta)$. 
If $A$ has identity, then we say that $(A,\delta_1,\delta_2, \Delta)$ is {\em left-connected} if $\ker \delta_1 =\KK 1$. Symmetrically, $(A,\delta_1,\delta_2, \Delta)$ is {\em right-connected} if $\ker \delta_2 =\KK 1$, and it is {\em connected} if it is both left- and right-connected.
\end{definition}

Since $\Delta$ is a covariant derivation with respect to $(\delta_1,\delta_2)$, $\Delta$ is a $C(A)$-bilinear map. In the case of unital $A$, for all $a\in \KK$, $\delta_1(a 1) = \delta_2(a 1) =0$. Therefore, $(A,\delta_1,\delta_2, \Delta)$ is right- or left-connected if and only if $C(A)= \KK 1$.

\begin{example}\label{ex.cov}~

(1) Recall that a unital algebra $A$ is said to be {\em Frobenius} if it is isomorphic to its vector space dual as a right (equivalently, left) $A$-module. By \cite[Proposition~2.1]{Abr:mod} a unital algebra $A$ is a Frobenius algebra if and only if it admits a coassociative and counital   comultiplication $\Delta$ that is a morphism of $A$-bimodules. Thus, if $A$ is a Frobenius algebra, then $(A,0,0,\Delta)$ is a covariant bialgebra. Obviously, the constant subalgebra of $(A,0,0,\Delta)$ is equal to $A$.

(2) $(A,\Delta,\Delta,\Delta)$ is a covariant bialgebra if and only if $(A,\Delta)$ is an infinitesimal bialgebra \cite{Agu:inf}, i.e.\ an algebra equipped with a coassociatitive comultiplication which is also a derivation. Obviously, $C(A) = \ker \Delta$.
\end{example}

\begin{proposition}\label{prop.cov.unit}
Let $A$ be a unital algebra, and let $\delta_1,\delta_2: A\to A\ot A$ be derivations. 

\begin{zlist}
\item There exists a coassociative covariant derivation $\Delta:A\to A\ot A$ with respect to $(\delta_1,\delta_2)$ if and only if there exists $u\in A\ot A$ such that, for all $a\in A$,
\begin{subequations}\label{cov.unit}
\begin{gather}
(\delta_1 - \delta_2)(a) = au - ua, \label{cov.unit.a}\\
(\delta_1\ot \id - \id\ot\delta_1)\circ \delta_1(a) = u_{23}\delta_1(a)_{13}, \label{cov.unit.b}\\
(\delta_1\ot \id - \id\ot\delta_1)(u) = u_{23} u_{13} - u_{12}u_{23}, \label{cov.unit.c}
\end{gather}
\end{subequations}
where the leg-numbering notation \eqref{legs} is used. In this case
\begin{equation}\label{copr.u}
\Delta(a) = ua + \delta_1(a) = au + \delta_2(a).
\end{equation}
\item The constant subalgebra of $(A,\delta_1,\delta_2, \Delta)$ is the maximal subalgebra of $A$ over which $\Delta$ is bilinear. 
\end{zlist}
\end{proposition}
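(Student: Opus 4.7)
The plan for part (1) is to identify $u$ as $\Delta(1)$ and bootstrap from there. Since $A$ is unital and $\delta_1,\delta_2$ are derivations, one has $\delta_i(1)=0$. Setting $u:=\Delta(1)$ and specialising the covariant derivation identity $\Delta(ab)=\Delta(a)b+a\delta_1(b)=a\Delta(b)+\delta_2(a)b$ first with $b=1$ and then with $a=1$ immediately yields the two expressions $\Delta(a)=ua+\delta_1(a)$ and $\Delta(a)=au+\delta_2(a)$, proving \eqref{copr.u}; comparing them gives \eqref{cov.unit.a}.

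For coassociativity, the plan is to expand both $(\Delta\ot\id)\Delta(a)$ and $(\id\ot\Delta)\Delta(a)$ using $\Delta(x)=ux+\delta_1(x)$ and the Leibniz rule for $\delta_1$. After grouping terms in leg notation, the difference collapses to
\begin{equation*}
[(\delta_1\ot\id-\id\ot\delta_1)(u)+u_{12}u_{23}-u_{23}u_{13}](1\ot 1\ot a)+(\delta_1\ot\id-\id\ot\delta_1)\delta_1(a)-u_{23}\delta_1(a)_{13}.
\end{equation*}
Equating this to zero and specialising to $a=1$ (so that $\delta_1(a)=0$) isolates the first bracket and yields \eqref{cov.unit.c}; feeding this identity back into the general equation reduces it to \eqref{cov.unit.b}. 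The converse direction runs this computation backwards: starting from $u$ subject to \eqref{cov.unit.a}--\eqref{cov.unit.c}, define $\Delta(a):=ua+\delta_1(a)$; condition \eqref{cov.unit.a} ensures this also equals $au+\delta_2(a)$, the Leibniz rule for $\delta_i$ furnishes the covariant derivation identities, and \eqref{cov.unit.b} with \eqref{cov.unit.c} enforce coassociativity.

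For part (2), the argument is short. I will show that a subalgebra $B\subseteq A$ is one over which $\Delta$ is bilinear --- meaning $\Delta(ba)=b\Delta(a)$ and $\Delta(ab)=\Delta(a)b$ for all $a\in A$ and $b\in B$ --- if and only if $B\subseteq C(A)$. The covariant derivation identity rewrites these two conditions as $\delta_2(b)a=0$ and $a\delta_1(b)=0$ for all $a\in A$; taking $a=1$ forces $\delta_1(b)=\delta_2(b)=0$, i.e.\ $b\in C(A)$. The reverse implication is immediate. Since $C(A)=\ker\delta_1\cap\ker\delta_2$ is itself a subalgebra (kernels of derivations on unital algebras are unital subalgebras), it is the maximal such subalgebra.

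The main obstacle is the leg-notation calculation in part~(1): keeping track of which copy of $u$ acts in which tensor legs when expanding $(\Delta\ot\id)\Delta$ and $(\id\ot\Delta)\Delta$, and of the cancellation of the mixed term $u_{12}\delta_1(a)_{23}$ that appears on both sides, requires some care. Once this bookkeeping is in place, the extraction of \eqref{cov.unit.c} (by setting $a=1$) followed by \eqref{cov.unit.b} is mechanical, and part~(2) follows directly from the bimodule formulation of the defining identity.
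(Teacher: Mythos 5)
Your proposal is correct and follows essentially the same route as the paper: set $u=\Delta(1)$, read off \eqref{copr.u} and \eqref{cov.unit.a} from the covariant derivation identity, extract \eqref{cov.unit.c} from coassociativity at $a=1$ and \eqref{cov.unit.b} from the general case, and for part (2) force $\delta_1(b)=\delta_2(b)=0$ from bilinearity; your leg-notation expansion (including the cancellation of $u_{12}\delta_1(a)_{23}$) checks out and merely supplies the details the paper leaves to the reader.
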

\begin{proof}
(1) Let $\Delta: A\to A\ot A$ be a $\KK$-linear map and set $u=\Delta(1)$. Then $\Delta$ is a covariant derivation if and only if, for all $a\in A$,
$$
\Delta(a) = ua + \delta_1(a) = au + \delta_2(a),
$$
which is equivalent to \eqref{cov.unit.a} and necessarily includes \eqref{copr.u}. The coassociativity of $\Delta$ at $a=1$ is equivalent to \eqref{cov.unit.c}. Writing down the coassociativity condition for $\Delta$ at general $a\in A$, and using the covariant derivation property and \eqref{cov.unit.c}, one finds that this condition is equivalent to \eqref{cov.unit.b}. 

(2)  Since $\Delta$ is a covariant derivation, it is bilinear over $C(A)$.  Conversely, if $b\in A$ is such that $\Delta(ab) = \Delta(a)b$ and $\Delta(ba) = b \Delta(a)$, for all $a\in A$, then
$$
\delta_1(b) = \Delta(b) - ub = \Delta(1)b -ub =0.
$$
 by \eqref{copr.u}. Similarly, $\delta_2(b) =0$.
\end{proof}

\begin{corollary}\label{cor.unital}
$(A,\delta_1,\delta_2,\Delta)$ is a unital covariant bialgebra if and only if, for all $a\in A$,
\begin{subequations}\label{cov.unit.unit}
\begin{gather}
(\delta_1 - \delta_2)(a) = a\ot 1 - 1\ot a, \label{cov.unit.unit.a}\\
(\delta_1\ot \id - \id\ot\delta_1)\circ \delta_1(a) = \delta_1(a)_{13}. \label{cov.unit.unit.b}
\end{gather}
\end{subequations}
and 
\begin{equation}\label{copr.1}
\Delta(a) = 1\ot a + \delta_1(a). 
\end{equation}

A unital covariant bialgebra is connected.
\end{corollary}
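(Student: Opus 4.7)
The plan is to obtain the corollary as a specialisation of Proposition~\ref{prop.cov.unit} to $u = 1\ot 1$, with the connectedness claim then following from counitality. For the equivalence, if $(A,\delta_1,\delta_2,\Delta)$ is a unital covariant bialgebra, evaluating \eqref{copr.u} at $1$ shows that the element $u$ of Proposition~\ref{prop.cov.unit} must be $u = \Delta(1) = 1\ot 1$. Substituting $u = 1\ot 1$ throughout I would verify term-by-term that: \eqref{cov.unit.a} reduces to \eqref{cov.unit.unit.a}, since $a\cdot(1\ot 1) - (1\ot 1)\cdot a = a\ot 1 - 1\ot a$ in the bimodule $A\ot A$; \eqref{cov.unit.b} reduces to \eqref{cov.unit.unit.b}, since $u_{23} = 1\ot 1\ot 1$ acts as the identity on $\delta_1(a)_{13}$; \eqref{cov.unit.c} collapses to the tautology $0=0$; and \eqref{copr.u} becomes exactly \eqref{copr.1}. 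The converse reverses this substitution: given \eqref{cov.unit.unit.a}, \eqref{cov.unit.unit.b} and \eqref{copr.1} I would set $u := 1\ot 1$, note that \eqref{cov.unit.a}--\eqref{cov.unit.c} are then all satisfied (with the third holding tautologically), and invoke Proposition~\ref{prop.cov.unit} to produce a covariant bialgebra. The resulting structure is unital because $\Delta(1) = 1\ot 1 + \delta_1(1) = 1\ot 1$, using that any derivation annihilates the identity of a unital algebra.

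For the connectedness claim I would use the counit $\eps : A\to\KK$ attached to the coalgebra $(A,\Delta)$. Counitality applied to $\Delta(1) = 1\ot 1$ yields $\eps(1) = 1$. Applying $\id\ot\eps$ to \eqref{copr.1} then gives
$$
a \;=\; (\id\ot\eps)\bigl(\Delta(a)\bigr) \;=\; \eps(a)\,1 + (\id\ot\eps)\bigl(\delta_1(a)\bigr),
$$
so that $\delta_1(a) = 0$ forces $a = \eps(a)\,1 \in \KK 1$, proving $\ker\delta_1\subseteq \KK 1$; the reverse inclusion is automatic because any derivation satisfies $\delta_1(1) = 0$. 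Combining \eqref{cov.unit.unit.a} with \eqref{copr.1} one also obtains the symmetric identity $\Delta(a) = a\ot 1 + \delta_2(a)$, and the parallel argument applying $\eps\ot\id$ yields $\ker\delta_2 = \KK 1$. Thus $(A,\delta_1,\delta_2,\Delta)$ is connected.

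Nothing in the argument is delicate; the whole proof is essentially mechanical substitution plus a standard counit argument. The only place needing a moment of attention is verifying that \eqref{cov.unit.c} becomes trivial under $u = 1\ot 1$ and hence imposes no additional constraint, so that the three equations of Proposition~\ref{prop.cov.unit} collapse to the two equations of Corollary~\ref{cor.unital}.
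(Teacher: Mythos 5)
The first half of your argument---the reduction of Proposition~\ref{prop.cov.unit} to the case $u=\Delta(1)=1\ot 1$---is correct and is exactly the paper's route; you are in fact slightly more explicit than the paper in observing that \eqref{cov.unit.c} degenerates to $0=0$ (both sides vanish because $\delta_1(1)=0$ and $u_{12}u_{23}=u_{23}u_{13}=1\ot 1\ot 1$), so that the three conditions of Proposition~\ref{prop.cov.unit} really do collapse to the two of the corollary.

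The connectedness argument, however, has a genuine gap: you invoke ``the counit $\eps$ attached to the coalgebra $(A,\Delta)$'', but in Definition~\ref{def.cov.bialg} the coalgebra $(A,\Delta)$ is only required to be coassociative; a counit is not part of the structure. The paper treats counitality as an extra hypothesis only in Remark~\ref{rem.counit}, and unital covariant bialgebras genuinely need not admit one: for $\Delta_0(a)=1\ot a$ from Example~\ref{ex.quasi.uni}, the requirement $(\id\ot\eps)\circ\Delta_0=\id$ would force $a=\eps(a)1$ for all $a$, impossible once $\dim A>1$. So the identity $a=\eps(a)1+(\id\ot\eps)(\delta_1(a))$ on which your whole second paragraph rests is not available. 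The paper's own argument is counit-free and uses only \eqref{cov.unit.unit.a}: if $a\in\ker\delta_2$ then $\delta_1(a)=a\ot 1-1\ot a$, and symmetrically if $a\in\ker\delta_1$ then $\delta_2(a)=1\ot a-a\ot 1$; in particular any $a\in\ker\delta_1\cap\ker\delta_2$ satisfies $a\ot 1=1\ot a$ and hence lies in $\KK 1$, giving $C(A)=\KK 1$. Be aware, too, that the literal conclusion $\ker\delta_1=\KK 1$ cannot hold unconditionally---the unital covariant bialgebra $(A,0,\delta_U,\Delta_0)$ of Example~\ref{ex.quasi.uni} has $\ker\delta_1=A$---so what can actually be extracted here is the triviality of the constant subalgebra $C(A)$ in the sense of Definition~\ref{def.connected}, not the separate statements $\ker\delta_1=\KK 1$ and $\ker\delta_2=\KK 1$ that your counit computation was aiming for.
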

\begin{proof}
If $(A,\delta_1,\delta_2,\Delta)$ is to be a unital covariant bialgebra, then $u=1\ot 1$, and equations \eqref{cov.unit} reduce to \eqref{cov.unit.unit}, while \eqref{copr.u} becomes \eqref{copr.1}. 

Obviously, $\KK 1\subseteq C(A)$. Conversely, if $a\in \ker\delta_2(a)$, then $\delta_1(a) =a\ot 1 -1\ot a$ by \eqref{cov.unit.unit.a}. Hence $a\in \KK 1$. Similarly, \eqref{cov.unit.unit.a} yields that $\delta_1(a) =0$ implies $a\in \KK 1$. Therefore, $(A,\delta_1,\delta_2,\Delta)$ is connected.
\end{proof}

\begin{remark}\label{rem.unital}
It is worth pointing out that if $A$ is a unital algebra, then in a  covariant bialgebra $(A,\delta_1,\delta_2,\Delta)$ both $\delta_2$ and $\Delta$ are fully determined by $\delta_1$ and $\Delta(1)$ through relation \eqref{cov.unit.a} and \eqref{copr.u}. (In a similar way $\delta_1$ and $\Delta$  are determined by $\delta_2$ and $\Delta(1)$.) This is a reason for $\delta_2$ not featuring in equations \eqref{cov.unit.b}-\eqref{cov.unit.c}. On the other hand, using \eqref{cov.unit.a} one can replace $\delta_1$ by $\delta_2$ in \eqref{cov.unit.b}-\eqref{cov.unit.c} and thus obtain the equivalent conditions
\begin{subequations}\label{cov.unit.2}
\begin{gather}
( \id\ot\delta_2 - \delta_2\ot \id )\circ \delta_2(a) = \delta_2(a)_{13}u_{12}, \label{cov.unit.2b}\\
(\delta_2\ot \id - \id\ot\delta_2)(u) = u_{12} u_{23} - u_{13}u_{12}, \label{cov.unit.2c}
\end{gather}
\end{subequations}

In the case of a unital covariant bialgebra $(A,\delta_1,\delta_2,\Delta)$, the form of any two of $\delta_1,\delta_2,\Delta$ is fully determined by the third one.
\end{remark}

\begin{example}\label{ex.dual}
Let $A= \KK[\zeta]/\langle\zeta^2\rangle$ be the algebra of dual numbers and consider the map 
$$
\delta_1: A\to A\ot A, \qquad 1\mapsto 0, \quad \zeta\mapsto \zeta\ot \zeta.
$$
One easily checks that $\delta_1$ is a derivation, and since $A$ is a unital algebra, in the view of Proposition~\ref{prop.cov.unit} and Remark~\ref{rem.unital}, we need to find $u\in A\ot A$ that satisfies \eqref{cov.unit.b}-\eqref{cov.unit.c}. Set
$$
u = a1\ot\zeta + b\zeta\ot\zeta + c1\ot 1 + d\zeta\ot 1.
$$
Since $(\delta_1\ot \id - \id\ot\delta_1)\circ \delta_1 =0$, equation \eqref{cov.unit.b} is equivalent to 
$$
(a1\ot1\ot \zeta + b1\ot \zeta\ot\zeta + c1\ot 1\ot 1 + d1\ot \zeta\ot 1)\zeta\ot 1\ot \zeta =0,
$$
which is satisfied provided $c=d=0$. Then \eqref{cov.unit.c} becomes
$$
-a^2 1\ot\zeta\ot\zeta - ab\ \zeta\ot\zeta\ot\zeta = -a 1\ot\zeta\ot\zeta,
$$
so that $a^2 =a$ and $ab=0$. Therefore,
$$
u = 1\ot \zeta \quad \mbox{or}\quad u = b\zeta\ot\zeta, \quad b \in \KK.
$$
In the first case,
$$
\Delta(1) = 1\ot \zeta, \qquad \Delta(\zeta) = \zeta\ot\zeta .
$$
This covariant bialgebra is left-connected but it is not right-connected. In the second case,
$$
\Delta(1) = b\zeta\ot\zeta, \qquad \Delta(\zeta) = (1+b) \zeta\ot\zeta,
$$
and the bialgebra is connected. Only when  $b=0$, $\Delta$ is a derivation, thus making $A$ into an infinitesimal bialgebra in this case; cf.\ \cite[Example~2.3.6]{Agu:inf}. 
\end{example}

\begin{remark}\label{rem.counit}
Dually to unital covariant bialgebras one can consider counital covariant bialgebras. If $A$ is a counital coalgebra with a counit $\eps$, and $(A,\delta_1,\delta_2,\Delta)$ is a covariant bialgebra, then the covariant derivation property of $\Delta$ yields, for all $a,b\in A$,
\begin{subequations}\label{cov.counit}
\begin{gather}
(\eps\ot \id)(a\delta_1(b)) = 0 , \qquad (\id\ot \eps)(\Delta(a) b +a\delta_1(b)) = ab \label{cov.counit.a}\\
(\id\ot \eps) (\delta_2(a)b) = 0, \qquad (\eps\ot \id)(a\Delta(b)  +\delta_2(a)b) = ab .
\end{gather}
\end{subequations}
If, in addition it is assumed that $\eps$ is a multiplicative map (a condition dual to that of $\Delta(1) =1\ot 1$ in case of the unital bialgebras), in which case one feels justified in talking about {\em a counital covariant bialgebra}, equations \eqref{cov.counit} reduce to 
\begin{subequations}\label{cov.counit.counit}
\begin{gather}
(\eps\ot \id)\circ \delta_1 = 0, \qquad  a\left(\left(\id\ot \eps\right) \circ \delta_1(b) -b +\eps(b)\right) = 0 \\
(\id\ot \eps) \circ \delta_2 =0, \qquad \left(\left(\eps\ot \id\right)\circ \delta_2(a) +a - \eps(a)\right) b =  0 ,
\end{gather}
\end{subequations}
for all $a,b\in A$. In contrast to infinitesimal bialgebras \cite[Remark~2.2]{Agu:inf}, there exist non-trivial covariant bialgebras that are both unital and counital. An example of such a bialgebra is described in Example~\ref{ex.counital}. 
\end{remark}

Next a characterisation of covariant bialgebras with inner derivations and coproducts is given.

\begin{proposition}\label{prop.inner}
Let $A$ be an algebra and $r,s\in A\ot A$. Define $\delta_r, \delta_s, \Delta : A\to A\ot A$ by
\begin{equation}\label{inner}
\delta_r(a) = ar - ra, \qquad \delta_s(a) = as-sa, \qquad \Delta(a) = ar-sa.
\end{equation}
Then $(A,\delta_r, \delta_s, \Delta)$ is a covariant bialgebra if and only if, for all $a\in A$,
\begin{equation}\label{inner.YB}
a(r_{13}r_{12} - r_{12}r_{23} +s_{23}r_{13}) = 
(s_{13}r_{12} - s_{12}s_{23} +s_{23}s_{13})a.
\end{equation}
\end{proposition}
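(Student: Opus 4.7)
The plan is to unpack the definition of a covariant bialgebra for the triple $(\delta_r,\delta_s,\Delta)$ given in \eqref{inner}. Three conditions must be verified: that $\delta_r$ and $\delta_s$ are derivations, that $\Delta$ is a covariant derivation with respect to $(\delta_r,\delta_s)$, and that $\Delta$ is coassociative. The first two hold automatically for any choice of $r,s\in A\ot A$, so all the content of the statement must be concentrated in the coassociativity of $\Delta$, and the computation should exactly reproduce condition \eqref{inner.YB}.

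First I would dispose of the derivation and covariant-derivation conditions. The maps $\delta_r$ and $\delta_s$ are inner derivations on $A$ with respect to $r$ and $s$ (viewed as elements of the bimodule $A\ot A$), hence satisfy \eqref{deriv} without any further assumption. For the covariant derivation property of $\Delta$, a direct two-line check shows that for any $a,b\in A$,
\[
\Delta(ab)=abr-sab=(ar-sa)b+a(br-rb)=\Delta(a)b+a\delta_r(b),
\]
and symmetrically $\Delta(ab)=a\Delta(b)+\delta_s(a)b$, so no constraint on $(r,s)$ arises here.

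The substance is the coassociativity computation. Writing $r=\sum r\su1\ot r\su2$ and $s=\sum s\su1\ot s\su2$ and applying $\Delta$ leg-by-leg, I would compute $(\Delta\ot\id)\circ\Delta(a)$ and $(\id\ot\Delta)\circ\Delta(a)$ in Sweedler-type notation and then recognise the resulting sums as products of the legs $r_{ij}$ and $s_{ij}$ introduced in \eqref{legs}, with a single factor of $a$ inserted in the first, second or third tensor slot. Two of the eight resulting terms — those in which $a$ sits in the middle slot, namely $-s_{12}(1\ot a\ot 1)r_{23}$ — occur on both sides and cancel. Collecting the remaining terms by the slot in which $a$ appears gives
\[
(a\ot 1\ot 1)\bigl(r_{13}r_{12}-r_{12}r_{23}+s_{23}r_{13}\bigr)=\bigl(s_{13}r_{12}-s_{12}s_{23}+s_{23}s_{13}\bigr)(1\ot 1\ot a),
\]
which, since $a$ only multiplies into the outer slots, is precisely \eqref{inner.YB}. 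The reverse direction is immediate by reading this equivalence backwards.

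The only step that requires genuine care is the bookkeeping of the leg notation in the coassociativity calculation: one must track which of two ``copies'' of $r$ (respectively $s$) plays the role of $r_{12}$, $r_{13}$ or $r_{23}$ in each resulting term and remember that inner multiplication by $a$ commutes past identity tensor factors but not past components of $r$ or $s$ sitting in the same slot. No further identities beyond associativity of $A$ are used, so the computation is mechanical; I expect this to be the main (and only real) obstacle.
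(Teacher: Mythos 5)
Your proposal is correct and follows essentially the same route as the paper: the derivation and covariant-derivation conditions are checked to hold automatically, and the entire content is reduced to the coassociativity computation in Sweedler/leg notation, where the common term $-sar=-s_{12}(1\ot a\ot 1)r_{23}$ cancels and the remaining terms assemble into \eqref{inner.YB}. Nothing is missing.
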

\begin{proof}
Clearly $\delta_r, \delta_s$ are (inner) derivations and one easily checks that $\Delta$ is a covariant derivation with respect to  $(\delta_r, \delta_s)$. Writing $r = \sum r\su 1\ot r\su 2$, $s=\sum s\su 1\ot s\su 2$, one can compute, for all $a\in A$,
\begin{eqnarray*}
(\id\ot \Delta)\circ \Delta(a) &=& \sum ar\su 1\ot \Delta(r\su 2) - \sum s\su 1\ot \Delta(s\su 2a)\\
&=& \sum ar\su 1\ot r\su 2r - \sum ar\su 1\ot sr\su 2 - sar + \sum s\su 1\ot s s\su 2a\\
&=& ar_{12}r_{23} - as_{23}r_{13} -sar + s_{23}s_{13}a,
\end{eqnarray*}
and 
\begin{eqnarray*}
(\Delta \ot \id)\circ \Delta(a) &=& \sum  \Delta(ar\su 1)\ot r\su 2 - \sum  \Delta(s\su 1)\ot s\su 2a\\
&=& \sum ar\su 1r\ot r\su 2 - sar - \sum s\su 1r\ot s\su 2a + \sum s\su 1\ot s\su 2sa\\
&=& ar_{13}r_{12} - sar  -  s_{13}r_{12}a + s_{12}s_{23}a.
\end{eqnarray*}
Therefore, $\Delta$ is coassociative if and only if condition \eqref{inner.YB} is satisfied.
\end{proof}

\begin{remark} \label{rem.inner}
Note that in the case of a non-degenerate algebra $A$, the form of $\Delta$ in \eqref{inner} specifies the forms of $\delta_r$ and $\delta_s$. Suppose that $\Delta(a) = ra-sa$ is a $(\delta_1,\delta_2)$-covariant derivation. Then, for all $a,b\in A$,
$$
a(br -rb) = a\delta_1(b) \quad \mbox{and} \quad (as-sa)b = \delta_2(a)b.
$$
Since the product in $A$ is non-degenerate, these equations imply that $\delta_1 = \delta_r$ and $\delta_2 = \delta_s$.
\end{remark}

Immediately from Proposition~\ref{prop.inner} one deduces the existence of covariant bialgebras affiliated with  associative Yang-Baxter pairs. 
\begin{corollary}\label{cor.inner}
If $(r,s)$ is an associative Yang-Baxter pair on $A$ and $\delta_r, \delta_s,\Delta$ are defined by \eqref{inner}, then $(A,\delta_r, \delta_s, \Delta)$ is a covariant bialgebra. In this case,   $(A,\delta_r, \delta_s, \Delta)$ is called a {\em quasitriangular covariant bialgebra.}

Furthermore, if $f:A\to B$ is an algebra map and $(r^f,s^f)$ is the associative Yang-Baxter pair induced by $f$ as in \eqref{rfsf}, then $f$ is a morphism of quasitriangular covariant bialgebras from the one associated to $(r,s)$ to the one associated to $(r^f,s^f)$.
\end{corollary}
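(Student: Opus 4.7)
The plan is to invoke Proposition~\ref{prop.inner} for the first assertion and to verify coalgebra compatibility elementwise for the second; in both cases the work has essentially already been done in the preceding results.

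For the covariant bialgebra claim, I would observe that the left-hand side of \eqref{inner.YB} is exactly $a$ premultiplying the expression appearing in \eqref{rs.r}, while the right-hand side of \eqref{inner.YB} is the expression appearing in \eqref{rs.s} postmultiplied by $a$. If $(r,s)$ is an associative Yang-Baxter pair, each of those bracketed expressions vanishes by definition, so both sides of \eqref{inner.YB} are zero for every $a\in A$ and \eqref{inner.YB} is trivially satisfied. Proposition~\ref{prop.inner} then immediately gives that $(A,\delta_r,\delta_s,\Delta)$ is a covariant bialgebra, which is the content of the first statement.

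For the morphism claim, $f$ is an algebra map by hypothesis, so the only remaining task is to check that it intertwines the coproducts $\Delta_A(a)=ar-sa$ on $A$ and $\Delta_B(b)=b\,r^f-s^f b$ on $B$. Using the multiplicativity of $f$ together with $r^f=(f\otimes f)(r)$ and $s^f=(f\otimes f)(s)$ from \eqref{rfsf}, one obtains $\Delta_B(f(a))=f(a)\,r^f-s^f f(a)=(f\otimes f)(ar-sa)=(f\otimes f)(\Delta_A(a))$, which is the required coalgebra compatibility. I do not anticipate any real obstacle: the corollary is essentially a direct packaging of Proposition~\ref{prop.inner} with the multiplicativity argument already used in Lemma~\ref{lem.homo}. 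The only small point of care is that when $A$ or $B$ is non-unital, all computations take place inside $A\otimes A$ (respectively $B\otimes B$), with the conventions of \eqref{legs} and \eqref{inner} unchanged, so the argument goes through uniformly.
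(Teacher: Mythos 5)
Your proposal is correct and follows the paper's own (very terse) argument: the first claim is exactly the observation that both bracketed expressions in \eqref{inner.YB} vanish by \eqref{rs.r} and \eqref{rs.s}, so Proposition~\ref{prop.inner} applies, and the second claim is the multiplicativity computation $\Delta_B(f(a))=(f\ot f)(\Delta_A(a))$. You have simply written out the details the paper leaves implicit; nothing is missing.
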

\begin{proof}
The first statement follows immediately from Proposition~\ref{prop.inner}. The second is a consequence of the definition of the coproduct associated to an associative Yang-Baxter pair and the multiplicativity of $f$.
\end{proof}
\begin{lemma}\label{lem.quasi.YB}
Let $A$ be an algebra, $r,s\in A\ot A$ and $\delta_r, \delta_s,\Delta$ be defined by \eqref{inner}. Then $(A,\delta_r, \delta_s, \Delta)$ is a quasitriangular covariant bialgebra if and only if
\begin{equation}\label{quasi.YB}
(\id\ot \Delta)(r) = r_{13}r_{12} \qquad \mbox{and} \qquad (\Delta\ot \id) (s) = -s_{23}s_{13}.
\end{equation}
\end{lemma}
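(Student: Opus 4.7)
My plan is to reduce both displayed identities in \eqref{quasi.YB} to the defining Yang-Baxter pair equations \eqref{rs.r} and \eqref{rs.s} via direct calculation, after which Corollary~\ref{cor.inner} closes the argument.

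First, recall from Corollary~\ref{cor.inner} that with $\delta_r, \delta_s, \Delta$ given by \eqref{inner}, the quadruple $(A,\delta_r,\delta_s,\Delta)$ is a quasitriangular covariant bialgebra precisely when $(r,s)$ is an associative Yang-Baxter pair, i.e.\ when both \eqref{rs.r} and \eqref{rs.s} hold. Hence it suffices to show that the first equation in \eqref{quasi.YB} is equivalent to \eqref{rs.r} and the second to \eqref{rs.s}.

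For the first identity, I would unfold $(\id\ot\Delta)(r)$ using Sweedler-like notation \eqref{sweed} and the formula $\Delta(a) = ar - sa$, treating $A \ot A$ as an $A$-bimodule in the standard outer way so that $ar = \sum a r\su 1 \ot r\su 2$ and $sa = \sum s\su 1 \ot s\su 2 a$. A bookkeeping check with the leg-numbering convention \eqref{legs} then shows
\[
(\id\ot\Delta)(r) \;=\; \sum r\su 1\ot r\su 2 \tilde{r}\su 1 \ot \tilde{r}\su 2 \;-\; \sum r\su 1\ot s\su 1\ot s\su 2 r\su 2 \;=\; r_{12}r_{23} - s_{23}r_{13},
\]
so the equation $(\id\ot\Delta)(r) = r_{13}r_{12}$ is equivalent, after rearrangement, to $r_{13}r_{12} - r_{12}r_{23} + s_{23}r_{13} = 0$, which is precisely \eqref{rs.r}.

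Analogously, for the second identity I would compute
\[
(\Delta\ot\id)(s) \;=\; \sum s\su 1 \tilde{r}\su 1\ot \tilde{r}\su 2\ot s\su 2 \;-\; \sum \tilde{s}\su 1\ot \tilde{s}\su 2 s\su 1 \ot s\su 2 \;=\; s_{13}r_{12} - s_{12}s_{23}.
\]
Therefore $(\Delta\ot\id)(s) = -s_{23}s_{13}$ is equivalent to $s_{13}r_{12} - s_{12}s_{23} + s_{23}s_{13} = 0$, which is precisely \eqref{rs.s}. Combining the two equivalences yields the lemma.

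There is no real obstacle here beyond careful tracking of which copy of $A$ carries which tensor leg; the key is to notice that the two equations in \eqref{rs} split cleanly according to whether $r$ or $s$ is differentiated, which is exactly the asymmetry reflected in \eqref{quasi.YB}.
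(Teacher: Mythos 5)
Your proof is correct and follows essentially the same route as the paper: a direct Sweedler-notation computation showing $(\id\ot\Delta)(r)=r_{12}r_{23}-s_{23}r_{13}$ and $(\Delta\ot\id)(s)=s_{13}r_{12}-s_{12}s_{23}$, so that the two equations of \eqref{quasi.YB} are equivalent to \eqref{rs.r} and \eqref{rs.s} respectively. The paper merely states these computations more tersely; your leg-numbering bookkeeping is accurate throughout.
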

\begin{proof}
Using the definition of $\Delta$ in \eqref{inner} one easily checks that
$$
(\id\ot \Delta)(r) = r_{12}r_{23} - s_{23}r_{13}.
$$
Hence the first of equations \eqref{quasi.YB} is equivalent to \eqref{rs.r}. In a similar way, the second of equations  \eqref{quasi.YB} is equivalent to \eqref{rs.s}.
\end{proof}
\begin{lemma}\label{lem.quasi}
Unital quasitriangular covariant bialgebra structures on $A$ are determined by $r\in A\ot A$ such that
\begin{equation}\label{quasi.uni}
r_{13} = r_{13}r_{12} - r_{12}r_{23} +r_{23}r_{13},
\end{equation}
or equivalently, 
\begin{equation}\label{quasi.YB.uni}
(\id\ot \Delta)(r) = r_{13}r_{12} \qquad \mbox{and} \qquad (\Delta\ot \id) (r) = -r_{23}r_{13} +r_{23}+ r_{13} ,
\end{equation}
where $\Delta(a) = 1\ot a + ar - ra$.
\end{lemma}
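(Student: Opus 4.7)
The plan is to reduce the general quasitriangular description (from Proposition~\ref{prop.inner}, Corollary~\ref{cor.inner}, Lemma~\ref{lem.quasi.YB}) to the unital case by using the unit condition $\Delta(1) = 1\otimes 1$ to solve for $s$ in terms of $r$, and then check that both the Yang-Baxter pair equations collapse to a single one.

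\textbf{Step 1: reduce to a single element.} By Corollary~\ref{cor.inner} any quasitriangular covariant bialgebra is given by an associative Yang-Baxter pair $(r,s)$ via $\Delta(a) = a\cdot r - s\cdot a$ (using the $A$-bimodule structure on $A\otimes A$). The unital condition of Corollary~\ref{cor.unital} requires $\Delta(1) = 1\otimes 1$, which forces $r-s = 1\otimes 1$, i.e.\ $s = r - 1\otimes 1$. Substituting this back into the expression for $\Delta$ yields the formula stated in the lemma:
\[
\Delta(a) = a\cdot r - (r - 1\otimes 1)\cdot a = 1\otimes a + ar - ra.
\]

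\textbf{Step 2: collapse the two Yang-Baxter equations into one.} With $s_{ij} = r_{ij} - 1_{ij}$ (where $1_{ij}$ denotes the unit placed in the $(i,j)$-legs), I substitute into \eqref{rs.r} and \eqref{rs.s}. The first equation gives
\[
r_{13}r_{12} - r_{12}r_{23} + (r_{23}-1_{23})r_{13} = 0,
\]
which immediately rearranges to \eqref{quasi.uni}. The second equation is
\[
(r_{13}-1_{13})r_{12} - (r_{12}-1_{12})(r_{23}-1_{23}) + (r_{23}-1_{23})(r_{13}-1_{13}) = 0;
\]
expanding and observing that all terms containing a single unit cancel against each other and the $1\otimes 1\otimes 1$ terms also cancel, I again obtain \eqref{quasi.uni}. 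Hence the two equations of an associative Yang-Baxter pair merge into the single condition \eqref{quasi.uni}.

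\textbf{Step 3: equivalent coproduct form.} By Lemma~\ref{lem.quasi.YB} the two Yang-Baxter equations are equivalent to $(\id\otimes \Delta)(r) = r_{13}r_{12}$ and $(\Delta\otimes \id)(s) = -s_{23}s_{13}$. The first is unchanged. For the second, substitute $s = r - 1\otimes 1$: the left-hand side becomes $(\Delta\otimes \id)(r) - \Delta(1)\otimes 1 = (\Delta\otimes \id)(r) - 1\otimes 1 \otimes 1$ (using $\Delta(1)=1\otimes 1$), while the right-hand side expands as
\[
-(r_{23}-1_{23})(r_{13}-1_{13}) = -r_{23}r_{13} + r_{23} + r_{13} - 1\otimes 1\otimes 1.
\]
Cancelling the common $1\otimes 1\otimes 1$ yields the displayed form $(\Delta\otimes \id)(r) = -r_{23}r_{13} + r_{23} + r_{13}$, as claimed.

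The main obstacle is simply the bookkeeping with the leg-numbering notation and ensuring that the units of $\KK\oplus A$ (in the non-unital case)—or of $A$ here—behave correctly when multiplied against $r$ in the tensor components; once that is set up carefully, each step is a direct manipulation and the two Yang-Baxter identities indeed coincide, giving the bijection between unital quasitriangular covariant bialgebra structures on $A$ and elements $r\in A\otimes A$ satisfying \eqref{quasi.uni}.
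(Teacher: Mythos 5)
Your proposal is correct and follows essentially the same route as the paper: the paper's proof likewise observes that unitality forces $r-s=1\ot 1$ and then notes that equations \eqref{rs} reduce to \eqref{quasi.uni} and \eqref{quasi.YB} to \eqref{quasi.YB.uni}; you have simply written out the substitution and cancellation of the unit terms that the paper leaves implicit. The computations in all three steps check out.
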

\begin{proof}
Quasitriangular covariant bialgebra $(A,\delta_r, \delta_s, \Delta)$ is unital if and only if $r-s=1\ot 1$. In this case equations \eqref{rs} reduce to \eqref{lem.quasi}, while \eqref{quasi.YB} become \eqref{quasi.YB.uni}.
\end{proof}
\begin{example}\label{ex.quasi.uni}
Suppose $e\in A$ is an idempotent and let $\kappa \in \{0,1\}\subset \KK$. Then  $r_e=\kappa 1\ot e+ (1-\kappa)e\ot 1$ solves equation \eqref{quasi.uni}. Hence $(r_e,s_e)$ with $s_e=\kappa 1\ot e+ (1-\kappa)e\ot 1 -1\ot 1$ is an associative Yang-Baxter pair, and
\begin{equation}\label{deltae}
\Delta_e(a) = \kappa (a\ot e - 1\ot ea) ) + (1-\kappa)(ae\ot 1 - e\ot a) +1\ot a.
\end{equation}
 The solutions to the Rota-Baxter system come out as
\begin{equation}\label{Re}
R_e(a) = ea + \kappa (ae -ea), \qquad S_e(a) = (e-1)a+ \kappa (ae -ea).
\end{equation}
The associative and pre-Lie algebra structures on $A$ arising from Corollary~\ref{cor.RB} are
$$
a*_eb = eab +a(e-1)b + \kappa (abe - eab), \quad a\bullet_e b = eab - b(e-1)a + \kappa (aeb +bea - eab -bae).
$$
Taking $e=0,1\in A$, one obtains from \eqref{deltae} 
$$
\Delta_0(a) = 1\ot a, \qquad \Delta_1 = a\ot 1.
$$
$(A,0,\delta_U, \Delta_0)$ and $(A,\delta_U, 0, \Delta_1)$ are unique unital covariant bialgebras associated to the universal derivation $\delta_U: a\mapsto a\ot 1-1\ot a$. 
Seen from a different perspective, these are unique unital covariant bialgebras in which the comultiplication is either right (the case of $\Delta_0$) or left (the case of $\Delta_1$) $A$-linear. 

For a concrete solution to the Rota-Baxter system of the type \eqref{Re}, consider $A=M_2(\KK)$ and $e=\begin{pmatrix} 1 & 0\cr 0 & 0\end{pmatrix}$. Then 
$$
R_e\begin{pmatrix} a & b\cr c & d\end{pmatrix} = \begin{pmatrix} a & (1-\kappa) b \cr \kappa c &0\end{pmatrix},
\qquad S_e\begin{pmatrix} a & b\cr c & d\end{pmatrix} =\begin{pmatrix} 0 & -\kappa b \cr (\kappa -1)c & -d\end{pmatrix}
$$
\end{example}

\begin{example}\label{ex.counital}
The construction of Example~\ref{ex.quasi.uni} can be employed to produce an example of a quasitriangular  unital  and counital covariant bialgebra. Let 
$$
A=\KK[a,e]/\langle e^2-e ,\ ae\rangle.
$$
For $\kappa=1$, the unital coproduct \eqref{deltae} comes out as
\begin{equation}\label{delta.couni}
\Delta_e(a^n) =  a^n\ot e  +1\ot a^n, \qquad  
\Delta_e(e) = e\ot e .
\end{equation}
The coproduct $\Delta_e$ admits a multiplicative counit, 
\begin{equation}\label{eps.couni}
\eps(1) = \eps(e) = 1, \qquad \eps(a^n) = 0.
\end{equation}
Thus $A$ with coproduct \eqref{delta.couni} and counit \eqref{eps.couni} is a quasitriangular unital and counital covariant bialgebra.
\end{example}

\subsection{Covariant modules}

Hopf modules, i.e.\ vector spaces with compatible action and coaction of a bialgebra, form the representation category of bialgebras. In a similar manner, the representation category of covariant bialgebras is provided by covariant modules.

\begin{definition}\label{def.mod}
Let $(A,\delta_1,\delta_2,\Delta)$ be a covariant bialgebra. A right $A$-module and a right $A$-comodule $M$ is said to be a {\em right covariant $A$-module} provided the coaction is a right covariant $\delta_1$-derivation. Symmetrically, a left $A$-module and a left $A$-comodule $N$ is said to be a {\em left covariant $A$-module} provided the coaction is a left covariant $\delta_2$-derivation. A morphism of covariant modules is a map that is both $A$-linear and $A$-colinear. The category of right covariant $A$-modules is denoted by $\M^A_A$. If $A$ has an identity, then the full subcategory  of $\M^A_A$ consisting of unital modules is denoted by $\uM^A_A$. If $A$ has a counit, then the full subcategory of $\M^A_A$ consisting of counital modules is denoted by $\cM^A_A$. The category of right covariant $A$-modules that are both unital and counital is denoted by $\cuM^A_A$.
\end{definition}

By the obvious left-right symmetry of Definition~\eqref{def.mod} whatever is said about right covariant modules can equally well be said about left covariant modules.

\begin{example}\label{ex.module}
$A$ is both a left and right covariant module over $(A,\delta_1,\delta_2,\Delta)$ via multiplication and comultiplication. Consequently, for any subalgebra $B$ of the constant algebra $C(A)$ (see Definition~\ref{def.connected}) the assignment
\begin{equation}\label{ind}
V\mapsto V\ot_B A, \quad (v\ot_B a)b = v\ot_B ab, \quad v\ot_B a\mapsto v\ot_B \Delta(a), \qquad v\in V,\ a,b\in A
\end{equation}
defines a functor $F_B:\M_B \to  \M_A^A$  from the category of right $B$-modules to the category of right covariant $A$-modules. On morphisms $F_B$ is defined by $f\mapsto f\ot_B \id$.

Clearly, if $A$ has an identity, then the image of $F_B$ is in $\uM_A^A$, and if $A$ has counit, then the image of  $F_B$ is in $\cM_A^A$. 

The functors $F_B$ defined in this example are called {\em  free right covariant  module functors relative to $B$}. Symmetrically, free left  covariant module functors are defined.
\end{example}

\begin{proposition}\label{prop.adjoint}
Let $A$ be a unital algebra and  $(A,\delta_1,\delta_2, \Delta)$ be a covariant bialgebra, let $B\subseteq C(A)$ be a subalgebra, and set $u:= \Delta(1)$. For any right covariant $A$-module $M$ with coaction $\roM$, the {\em coinvariants} are defined by
\begin{equation}\label{coin}
M^{co A}:= \{m\in M\; |\; \roM(m) = mu\}.
\end{equation}
\begin{zlist}
\item The functor given on objects by
\begin{equation}\label{coin.fun}
G_B: \uM_A^A \to \M_B, \qquad M\mapsto M^{coA},
\end{equation}
and as identity on morphisms, is the right adjoint to the free right covariant  module functor $F_B:\M_B \to  \uM_A^A$ in Example~\ref{ex.module}.
\item If $A$ is  flat as a right $B$-module and $\ker \delta_2 = A^{coA} \subseteq B$, then $F_B$ is a full and faithful functor.
\end{zlist} 
\end{proposition}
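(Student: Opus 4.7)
For part (1), the plan is first to check that $G_B$ lands in right $B$-modules, i.e.\ that $M^{coA}$ is stable under the right $B$-action. The key identity is $ub = bu$ in $A\otimes A$ for any $b\in C(A)$, which follows from the $C(A)$-bilinearity of $\Delta$ noted after Definition~\ref{def.connected}: both $\Delta(b) = b\Delta(1) = bu$ and $\Delta(b) = \Delta(1)b = ub$ hold. Combined with the right covariant derivation property $\roM(mb) = \roM(m)b + m\delta_1(b)$ and the vanishing $\delta_1(b) = 0$ for $b \in B\subseteq C(A)$, this yields $\roM(mb) = (mu)b = (mb)u$ for $m\in M^{coA}$, as required. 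That $G_B$ preserves morphisms is immediate, since any $A$-colinear map preserves the coinvariance equation $\roM(m) = mu$.

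To establish the adjunction $F_B \dashv G_B$, I would exhibit the unit and counit explicitly. The natural candidate for the unit is $\eta_V : V \to (V\otimes_B A)^{coA}$, $v \mapsto v\otimes_B 1$; it lands in the coinvariants since $\rho(v\otimes_B 1) = v\otimes_B \Delta(1) = v\otimes_B u = (v\otimes_B 1) u$. The counit is $\varepsilon_M : M^{coA}\otimes_B A \to M$, $m\otimes_B a \mapsto ma$. The $A$-linearity of $\varepsilon_M$ is clear, and $A$-colinearity follows from \eqref{copr.u}: $\roM(ma) = \roM(m)a + m\delta_1(a) = mua + m\delta_1(a) = m\Delta(a)$. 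The triangle identities are then routine, and naturality of the induced bijection $\Hom_{\uM_A^A}(V\otimes_B A, M) \cong \Hom_B(V, M^{coA})$, $g\mapsto (v\mapsto g(v\otimes_B 1))$, is straightforward.

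For part (2), $F_B$ is fully faithful if and only if the unit $\eta_V$ is a natural isomorphism for every $V$. First, using \eqref{copr.u} to rewrite $\Delta(a) - au = \delta_2(a)$, one identifies
$$
(V\otimes_B A)^{coA} = \ker\bigl(\id_V\otimes_B \delta_2 : V\otimes_B A \to V\otimes_B (A\otimes A)\bigr),
$$
where the target makes sense because $\delta_2$ is left $B$-linear, as $\delta_2(ba) = b\delta_2(a) + \delta_2(b)a = b\delta_2(a)$ for $b\in B\subseteq \ker\delta_2$. Specialising to $V=B$ gives $A^{coA} = \ker\delta_2$; combined with $B\subseteq C(A)\subseteq \ker\delta_2$ and the assumption $A^{coA}\subseteq B$, one concludes that $B = \ker\delta_2$. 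Thus the sequence of left $B$-modules $0 \to B \to A \xrightarrow{\delta_2} A\otimes A$ is exact.

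The main obstacle is to transfer this exactness through the functor $V\otimes_B -$: one needs $\ker(\id_V\otimes_B\delta_2) = V\otimes_B B$ for every right $B$-module $V$, and this is precisely where the flatness of $A$ over $B$ enters. Granting this, the canonical isomorphism $V\otimes_B B \cong V$, $v\otimes_B b \mapsto vb$, identifies $\eta_V$ with the identity on $V$, so $\eta_V$ is a natural isomorphism and $F_B$ is fully faithful.
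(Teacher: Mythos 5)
Your proposal is correct and follows essentially the same route as the paper: the same unit $v\mapsto v\otimes_B 1$ and counit $m\otimes_B a\mapsto ma$ for part (1), and for part (2) the same identification of $(V\otimes_B A)^{coA}$ with $\ker(\id_V\otimes_B\delta_2)$ via \eqref{copr.u}, with flatness used to reduce this kernel to $V\otimes_B\ker\delta_2 = V\otimes_B B\cong V$. The one step you leave as ``granting this'' is treated at exactly the same level of brevity in the paper itself, so there is no substantive divergence.
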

\begin{proof}
(1) Note that the functor $G_B$ is well-defined, since, for all $b\in B$ and $m\in M^{coA}$,
$$
\roM(mb) = \roM(m)b + \delta_1(b) = mub = mbu,
$$
where the second equality follows by the fact that $B\subseteq C(A)$, and the third one is a consequence of \eqref{cov.unit.a}. 

The unit and counit of the adjunction are defined by, for all $V\in \M_B$ and $M\in \uM^A_A$, 
\begin{equation}\label{un.co}
\eta_V : V\to (V\ot_A A)^{coA}, \quad v\mapsto v\ot_B 1, \qquad \varphi_M: M^{coA}\ot_B A\to M, \quad m\ot_B a\mapsto ma.
\end{equation}
Clearly, $\eta_V$ is a well-defined right $B$-linear map natural in $V$. It is equally clear that $\varphi_M$ is a right $A$-linear map. To check the right $A$-colinearity of $\varphi_M$, take any $m\in M^{coA}$ and $a\in A$ and compute
\begin{eqnarray*}
\roM(\varphi_M(m\ot_B a)) &=& \roM(ma) = \roM(m)a + m\delta_1(a)\\
&=& mua + m\delta_1(a) = m\Delta(a) = (\varphi_M\ot \id)\circ (\id \ot_B \Delta)(m\ot_B a).
\end{eqnarray*}
The second equality follows by the definition of a right covariant $A$-module, the third one is a consequence of the fact that $m$ is a coinvariant element of $M$, and the fourth equality follows by \eqref{copr.u}. The naturality of $\varphi_M$ is obvious. Finally, the triangle equalities that a unit and a counit of an adjunction are required to satisfy follow by the unitality of the algebra $A$ and its unital modules.

(2) The equality $\ker\delta_2 = A^{coA}$ in the hypothesis follows by \eqref{copr.u}. Take a right $B$-module $V$ and assume that $\sum_i v_i\ot_B a_i \in (V\ot_B A)^{coA}$. Then 
$$
\sum_i v_i \ot_B \Delta(a_i) = \sum_i v_i\ot_B a_i u,
$$ 
and hence, in view of the second of equations \eqref{copr.u},
$$
\sum_i v_i \ot_B \delta_2(a_i) = 0.
$$ 
By the flatness of $A$ as a right $B$-module, $\delta_2(a_i) =0$, and since $\ker\delta_2 \subseteq B$, $a_i\in B$. Thus in this case we can define
\begin{equation}\label{inverse}
\eta^{-1}_V (\sum_i v_i\ot_B a_i ):=\sum_i v_i a_i . 
\end{equation}
One easily checks that $\eta^{-1}_V$ is the (natural) inverse to the unit of adjunction $\eta_V$. Hence $F_B$ is a full and faithful functor.
\end{proof}

\begin{remark}\label{rem.ff.coinv}
Using standard methods of Hopf-Galois theory one easily finds a sufficient condition for the coinvariants functor $G_B$ to be a full and faithful functor. Start with a covariant bialgebra
 $(A,\delta_1,\delta_2, \Delta)$, choose $B\subseteq C(A)$ and assume that $A$ is a unital algebra, admits a {\em right} counit,  and that $A$ is flat as a left $B$-module. If there exists a $\KK$-linear map $\tau: A\to A\ot_BA$, written on elements as $\tau(a) = \sum a\suc 1\ot_B a\suc 2$, such that, for all $a\in A$,
 \begin{subequations}\label{trans}
 \begin{gather}
 \sum a\sw 1 \tau(a\sw 2) = 1\ot_B a, \label{trans.a}\\
 (\delta_1\ot_B\id)\circ \tau (a) = \sum a\suc 1u\ot_B a\suc 2 - 1\ot 1\ot_B a, \label{trans.b}\\
 \sum a\suc 1 a\suc 2 = \eps(a) 1,\label{trans.c}
 \end{gather}
 \end{subequations}
 where $u = \Delta(1)$, then the counit $\varphi$ of the adjunction $F_B \dashv G_B$ has a natural inverse, for all $M\in \cuM_A^A$,
 $$
 \varphi_M^{-1}: M\to M^{co A}\ot_B A, \qquad m\mapsto \sum m\sw 0\tau(m\sw 1),
 $$
where the standard Sweedler notation for coactions $\roM(m) = \sum m\sw 0\ot m\sw 1$ is used. Therefore, $G_B$ is a full and faithful functor in this case. If in addition the hypothesis of Proposition~\ref{prop.adjoint}(2) is satisfied (in which case, condition \eqref{trans.c} implies \eqref{trans.a}), then the free covariant module functor $F_B: \M_B \to \cuM^A_A$ is an equivalence.
 
While the hypothesis of Proposition~\ref{prop.adjoint}(2) is easily fulfilled (for example, it holds for $B=\KK$ and all right-connected covariant bialgebras, thus in particular for all unital covariant bialgebras), examples of covariant bialgebras admitting function $\tau$ satisfying conditions \eqref{trans} (apart from the obvious trivial case $(\KK, 0,0,\id)$) seem to be hard to come by. Thus: whether a definition of a {\em covariant Hopf algebra} modelled on the fundamental theorem of Hopf modules which asserts that a bialgebra is a Hopf algebra if and only if the free Hopf module functor from the category of vector spaces to right (or left) Hopf modules is an equivalence, and in particular insisting  on existence of $\tau$ satisfying condition \eqref{trans}, is appropriate in the realm of covariant bialgebras remains to be seen. The definition of an {\em infinitesimal Hopf algebra} proposed in \cite{Agu:inf} is  motivated by a different viewpoint.
\end{remark}

In the case of quasitriangular covariant bialgebras, every $A$-module is a covariant module in a natural way (cf.\ \cite[Proposition~153, p.\ 339]{CaeMil:Fro}).
\begin{proposition}\label{prop.quasi.module}
Let $(A,\delta_r, \delta_s, \Delta)$ be a quasitriangular covariant bialgebra associated to an  associative Yang-Baxter pair $(r,s)$. Then every right $A$-module $M$ is a right covariant $A$-module with the coaction
\begin{equation}\label{mod.r}
\roM: M\to M\ot A, \qquad m\mapsto mr.
\end{equation}
Similarly, every left $A$-module $N$ is a left covariant $A$-module with the coaction
\begin{equation}\label{mod.l}
\roN: N\to A\ot N, \qquad n\mapsto -sm.
\end{equation}
\end{proposition}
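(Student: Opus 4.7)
The plan is to verify, for the right $A$-module $M$ with putative coaction $\roM(m)=mr$, the two axioms of a right covariant $A$-module: the covariant derivation property with respect to $\delta_r$ and the coassociativity of $\roM$. The left-module case is then entirely analogous, using $\delta_s$ and the second defining relation of the associative Yang-Baxter pair.

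The covariant derivation property is a one-line check that uses only the associativity of the right $A$-action and the definition $\delta_r(a)=ar-ra$ from \eqref{inner}; namely $\roM(ma)=m(ar)=m(ra)+m(ar-ra)=\roM(m)a+m\delta_r(a)$. Here $mr$ denotes $\sum mr\su 1\ot r\su 2$ in the evident sense of the right action on the first tensorand.

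For coassociativity I would extend the leg notation \eqref{legs} so that $m\cdot x_{ij}$, for $x_{ij}\in A\ot A\ot A$, means right-multiplication of $m\ot 1\ot 1$ by $x_{ij}$ in the first (module) slot. A direct Sweedler-type computation then yields
\[
(\roM\ot\id)\circ\roM(m)\;=\;m\cdot r_{13}r_{12},
\]
while expanding $\Delta(r\su 2)=r\su 2 r-sr\su 2$ via \eqref{inner} gives
\[
(\id\ot\Delta)\circ\roM(m)\;=\;m\cdot\bigl(r_{12}r_{23}-s_{23}r_{13}\bigr).
\]
The difference of the two expressions is $m\cdot\bigl(r_{13}r_{12}-r_{12}r_{23}+s_{23}r_{13}\bigr)$, which vanishes by \eqref{rs.r}.

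For the left covariant module $N$ with $\roN(n)=-sn$ the same bookkeeping gives $(\id\ot\roN)\circ\roN(n)=(s_{23}s_{13})\cdot n$ and, using $\Delta(s\su 1)=s\su 1 r-ss\su 1$, $(\Delta\ot\id)\circ\roN(n)=(-s_{13}r_{12}+s_{12}s_{23})\cdot n$; coassociativity then amounts precisely to \eqref{rs.s}. The minus sign in \eqref{mod.l} is what is needed for the signs from $-s\cdot(-sn)$ and from $-s\cdot s\su 1$ to align so that the three leg-terms of \eqref{rs.s} appear with matching signs. No real obstacle is anticipated; the proposition is a direct translation of the associative Yang-Baxter pair equations into module-theoretic language, and the only care required is in the bookkeeping of leg-numbering and signs.
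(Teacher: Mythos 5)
Your proof is correct and takes essentially the same route as the paper: the paper verifies coassociativity of $\roM$ by citing Lemma~\ref{lem.quasi.YB}, whose content $(\id\ot\Delta)(r)=r_{12}r_{23}-s_{23}r_{13}=r_{13}r_{12}$ is exactly the inline computation you perform (and likewise $(\Delta\ot\id)(s)=-s_{23}s_{13}$ for the left case), leaving the one-line covariant-derivation check implicit. Your silent correction of the misprint $-sm$ to $-sn$ in \eqref{mod.l} is the intended reading.
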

\begin{proof}
Observe that, for all $m\in M$,
$$
(\roM\ot \id)\circ\roM (m) = mr_{13}r_{12},
$$
and this is equal to $(\id\ot \Delta)\circ\roM (m)$ by Lemma~\ref{lem.quasi} and the definition of $\roM$ in \eqref{mod.r}. The second statement is proven in a similar way.
\end{proof}

\subsection{Extensions}\label{sec.ext}

Unlike the definition of a bialgebra, the definition of a covariant bialgebra does not use a switch or a twist map between the factors in a tensor product. Therefore, it can be verbatim transferred to any monoidal category enriched over vector spaces (or Abelian groups). The same is true about the definition of covariant modules. As a special case, one can consider the category of bimodules over a unital associative ring $B$ with the tensor product over $B$ as a monoidal structure. Following a long standing Hopf algebra tradition, a covariant bialgebra in this category might be  called a {\em covariant bialgebroid} and comprises  a $B$-bimodule $A$ together with a $B$-bilinear associative multiplication $\mu: A\ot_B A \to A$ (i.e.\ $(A,\mu)$ is a {\em non-unital $B$-ring}), two $B$-bilinear $A$-derivations $\delta_1, \delta_2: A\to A\ot_B A$ and a $B$-bilinear coassociative comultiplication $\Delta: A\to A\ot_B A$ that is required to be a $(\delta_1,\delta_2)$-covariant derivation. 

If a monoidal category has no braiding (as is the case, for example in the category of $B$-bimodules), equations \eqref{rs} no longer make sense in general. Still, even in such cases some additional conditions on $r$ and $s$ can be put, so that an associative Yang-Baxter pair and the corresponding Rota-Baxter system are formed. For example, in the case of $B$-bimodules, for any $B$-bimodule $M$, denote the centraliser of $B$ in $M$ by 
$$
M^B:= \{m\in \; |\; \mbox{for all $b\in B$},\; mb =bm\}.
$$
For any $r,s\in (A\ot_BA)^B$,  equations \eqref{rs} can be formed, maps \eqref{inner} are $B$-bilinear and $\Delta$ is coassociative provided $r,s$ solve \eqref{rs}. Furthermore, Proposition~\ref{prop.inner}  and Proposition~\ref{prop.quasi.module} (with the word bialgebra replaced by $B$-bialgebroid) remain valid for all $r,s\in (A\ot_BA)^B$.

\section{Twisted Rota-Baxter operators}
\label{sec.twiR-B}
To allow for the interpretation of Jackson's $q$-integral as a Rota-Baxter-type operator  the introduction of an endomorphism twist into the definition of the Rota-Baxter algebra seems to be needed. This is prompted by the fact that the $q$-derivation to which Jackson's integral is an inverse operation is itself an endomorphism (in fact automorphism) twisted derivation. In this section we propose a suitable twisting of a Rota-Baxter operator and show that every twisted Rota-Baxter operator leads to a Rota-Baxter system on the algebra on which it operates.
\begin{definition}\label{def.RBtwist}
Let $A$ be an associative algebra and let  $\sigma:A\to A$ be a multiplicative map. A $\KK$-linear operator $R:A\to A$ is called a {\em $\sigma$-twisted Rota-Baxter operator} if, for all $a,b\in A$,
\begin{equation}\label{twi.RB}
R(a)R(b) = R(R(a) b + a\ R^\sigma(b)),
\end{equation}
where $R^\sigma:= \sigma\circ R$.
\end{definition}

One should be made aware that the notion introduced in Definition~\ref{def.RBtwist} is different from that of \cite[Section~3]{Uch:ana}, which uses a cocycle rather than algebra homomorphism for twisting.

\begin{lemma}\label{lem.twist}
Let $R$ be a $\sigma$-twisted Rota-Baxter operator on $A$. Then $(A,R,R^\sigma)$ is a Rota-Baxter system. 
\end{lemma}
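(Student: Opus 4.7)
The plan is to verify directly that the pair $(R, R^\sigma)$ satisfies the two defining equations \eqref{RB.r} and \eqref{RB.s} of a Rota-Baxter system. The first one is essentially tautological: substituting $S = R^\sigma$ into \eqref{RB.r} gives back precisely the $\sigma$-twisted Rota-Baxter condition \eqref{twi.RB}, so nothing needs to be proved there.

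For the second equation \eqref{RB.s}, I would start from the twisted Rota-Baxter identity
\[
R(a)R(b) = R\bigl(R(a)b + a\, R^\sigma(b)\bigr),
\]
and apply the multiplicative map $\sigma$ to both sides. On the left, multiplicativity of $\sigma$ yields
\[
\sigma(R(a)R(b)) = \sigma(R(a))\,\sigma(R(b)) = R^\sigma(a)\, R^\sigma(b),
\]
while on the right one obtains $\sigma\circ R\bigl(R(a)b + a\, R^\sigma(b)\bigr) = R^\sigma\bigl(R(a)b + a\, R^\sigma(b)\bigr)$. Comparing the two expressions gives \eqref{RB.s} with $S = R^\sigma$, which completes the verification.

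There is no real obstacle here; the only ingredient beyond the hypothesis is the multiplicativity of $\sigma$, which is built into Definition~\ref{def.RBtwist}. The proof is just one short computation applying $\sigma$ to \eqref{twi.RB}.
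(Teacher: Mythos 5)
Your proof is correct and follows exactly the same route as the paper: identify \eqref{twi.RB} with \eqref{RB.r} for $S=R^\sigma$, then apply the multiplicative map $\sigma$ to both sides of \eqref{twi.RB} to obtain \eqref{RB.s}. Nothing is missing.
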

\begin{proof}
Equation \eqref{twi.RB} is equivalent to \eqref{RB.r} with $S=R^\sigma$. Applying $\sigma$ to \eqref{twi.RB} and using the multiplicativity of $\sigma$ one obtains \eqref{RB.s} with $S=R^\sigma$.
\end{proof}

The introduction of twisted Rota-Baxter operators is motivated by the following
\begin{example}\label{ex.Jackson}
Assume that $\KK$ is a field of characteristic zero, and let $q\in \KK$ be a non-zero number that is not a root of unity. For all integers $n$ denote by $[n]_q$ the $q$-integers,
$$
[n]_q = \frac{1-q^n}{1-q}.
$$
Let $A=\KK[x]$ a polynomial ring and define $\KK$-linear operators $\sigma, J: A\to A$ by
\begin{equation}\label{jac}
\sigma(x^n) = q^n x^n, \qquad J(x^n) = \frac{1}{[n+1]_q} x^{n+1}.
\end{equation}
The operator $J$ is the Jackson $q$-integral. Clearly, $\sigma$ is an algebra map and 
$$
J^\sigma(x^n) := \sigma\circ J(x^n) = \frac{q^{n+1}}{[n+1]_q} x^{n+1},
$$
so that
$$
J(J(x^n)x^m + x^n J^\sigma(x^m)) = \frac{[m+n+2]_q}{[m+1]_q[n+1]_q} J(x^{m+n+1}) = J(x^n)J(x^m).
$$
Therefore, the Jackson $q$-integral is a twisted Rota-Baxter operator. In view of Corollary~\ref{cor.RB}, $\KK[x]$ has an associative product
$$
x^n*x^m = \frac{[m+n+2]_q}{[m+1]_q[n+1]_q} x^{m+n+1},
$$
while 
$$
x^n\bullet x^m = (1-q)x^{m+n+1},
$$
defines a pre-Lie algebra structure on $\KK[x]$.
\end{example}

\begin{remark}\label{rem.Jackson}
The fact that the Jackson integral is a solution to the twisted Rota-Baxter equation was observed in \cite{EbrMan:twi}. This observation led the authors to introduce the notion of a twisted dendriform algebra \cite[Definition~1]{EbrMan:twi}.  One can easily prove that in fact, by taking into account the twist in the definition of one of the dendriform operations (as indicated, for example, by formula (41) in \cite{EbrMan:twi}), a twisted dendriform algebra can always be interpreted as a dendriform algebra.
\end{remark}

Differential Rota-Baxter algebras provide yet another example of a twisted Rota-Baxter operator.

\begin{example}\label{ex.diff}
Following \cite[Definition~1.1]{GuoKei:dif} a {\em differential Rota-Baxter algebra of weight $\lambda$} is a triple $(A,R,\partial)$ such that $(A,R)$ is a Rota-Baxter algebra of weight $\lambda$, the $\KK$-linear operator $\partial: A\to A$ satisfies the twisted Leibniz rule,
\begin{equation}\label{Leibniz.lambda}
\partial(ab) = \partial(a)b + a\partial(b) +\lambda\partial(a)\partial(b), \qquad \mbox{for all $a,b\in A$,}
\end{equation}
and
\begin{equation}\label{R.par}
\partial \circ R = \id.
\end{equation}
Define a $\KK$-linear operator 
\begin{equation}\label{sig}
\sigma_\lambda : A\to A, \qquad a\mapsto a +\lambda\partial(a).
\end{equation}
Then the twisted Leibniz rule \eqref{Leibniz.lambda} implies that $\sigma_\lambda$ is an algebra map, while \eqref{R.par} implies that 
$$
R^{\sigma_\lambda}(a) = R(a) + \lambda a.
$$
Hence $R$ is a $\sigma_\lambda$-twisted Rota-Baxter operator.
\end{example}

\begin{example}\label{ex.diff.twist}
Example~\ref{ex.diff} suggests the following generalisation of differential Rota-Baxter algebras. Recall that given an algebra endomorphism $\sigma: A\to$, and $\KK$-linear map $\partial:A\to A$ is called a {\em $\sigma$-derivation}, provided,
\begin{equation}\label{Leibniz.twist}
\partial(ab) = \partial(a)\sigma(b) + a\partial(b), \qquad \mbox{for all $a,b\in A$.}
\end{equation}
A  {\em twisted differential Rota-Baxter algebra}  is quadruple $(A,\sigma,\partial,R)$ consisting of an algebra $A$, an algebra endomorphism $\sigma: A\to A$, a $\sigma$-twisted derivation $\partial:A\to A$ and a $\sigma$-twisted Rota-Baxter operator $R:A\to A$ satisfying \eqref{R.par}.

In the context of the Jackson $q$-integral Example~\ref{ex.Jackson}, define $\partial: \KK[x]\to \KK[x]$ by $\partial(x^n) = [n]_qx^{n-1}$. Then $(\KK[x], \sigma, \partial, J)$, with $\sigma$ and $J$ given by \eqref{jac}, is a twisted differential Rota-Baxter algebra.
\end{example}

\section{Conclusions}
In this paper we proposed a modification of the notion of a Rota-Baxter algebra. A number of examples (general and specific) as well as an algebraic grounding for Rota-Baxter systems were presented. Whether  Rota-Baxter systems can find applications in the areas to which traditionally Rota-Baxter algebras are applied (probability, combinatorics, renormalisation of quantum field theories) remains to be seen. On the algebraic side, the author believes that covariant bialgebras deserve to be studied further, and that such a study  constitutes an interesting research avenue.

In a different direction, one can easily write the Lie-algebraic version of classical Yang-Baxter pairs,
\begin{equation}\label{rs.Lie}
[r_{12},r_{13}]  + [r_{12},r_{23}] +[s_{13},r_{23}] = 0, \qquad [s_{12},r_{13}] + [s_{12},s_{23}] +[s_{13},s_{23}] = 0, \end{equation}
Whether the system of equations \eqref{rs.Lie} (e.g.\ with Poisson brackets as Lie operations)
\begin{blist}
\item  can find usage in the Hamiltonian theory and lead to solutions of new and existing examples of integrable systems, 
\item can be quantised and lead to interesting algebraic structures,
\end{blist}
in the way similar to the classical Yang-Baxter equations, are open questions which the author believes are worth further study.

\end{document}